\definecolor{darkblue}{rgb}{0.0, 0.0, 0.55}
\renewcommand{\subset}{\subseteq}
\renewcommand{\emptyset}{\varnothing}
\newcommand{\mycontentsbox}{%
{\addtolength{\parskip}{-2.3pt}
\small\tableofcontents}}
\def\enddoc@text{\ifx\@empty\@translators \else\@settranslators\fi
\ifx\@empty\addresses \else\@setaddresses\fi
\newpage\mycontentsbox\newpage\printindex}
\newtheorem{theorem}{Theorem}[section]
\newtheorem{prop}[theorem]{Proposition}
\newtheorem{remark}[theorem]{Remark}
\newtheorem{example}[theorem]{Example}
\newtheorem{thm}[theorem]{Theorem}
\newtheorem{lem}[theorem]{Lemma}
\newtheorem*{lemma*}{Lemma}
\def\beq{\begin{equation}}
\def\eeq{\end{equation}}
\def\ben{\begin{enumerate}}
\def\een{\end{enumerate}}
\def\tY{\tilde{Y}}
\def\tB{\tilde{B}}
\def\hB{\hat{B}}
\def\hb{\hat{b}}
\def\hom{\mathfrak{H}}
\def\ssec{\subsection}
\def\sssec{\subsubsection}
\numberwithin{equation}{section}
\def\cD{ {{\mathcal D}}}
\def\cH{ {{\mathcal H}}}
\def\cP{ {{\mathcal P}}}
\def\R{ {\mathbb{R}} }
\def\C{ {\mathbb{C}} }
\def\cC{ {\mathcal C} }
\def\cD{ {\mathcal D} }
\def\cE{ {\mathcal K} }
\def\cH{ {\mathcal H} }
\def\cP{{\mathcal P}}
\def\cS{{\mathcal S} }
\def\cT{{\mathcal T}}
\def\cP{{\mathcal P}}
 \def\bs{\bigskip}
\def\cS{{\mathcal S}}
\newcommand{\df}[1]{{\bf{#1}}{\index{#1}}}
\def\arv{\partial^{\operatorname{Arv}}}
\def\euc{\partial^{\operatorname{Euc}}}
\def\abs{\partial^{\operatorname{free}}}
\def\comat{\operatorname{co}^{\operatorname{mat}}}
\def\ssplus1{\cD_{\cS}^{g+1}}
\def\cE{\mathcal{E}}
\def\smnrg{{SM_n(\R)^g}}
\def\smnrgp{{SM_n(\R)^{g+1}}}
\def\smmrgp{{SM_n(\R)^{g+1}}}
\def\smnkg{{SM_n(\R)^g}}
\def\smnonerg{{SM_{n_1}(\R)^g}}
\def\smntworg{{SM_{n_2}(\R)^g}}
\def\smrg{{SM(\R)^g}}
\def\smrgp{{SM(\R)^{g+1}}}
\def\smdrg{{SM_d(\R)^g}}
\def\smdrgp{{SM_d(\R)^{g+1}}}
\def\smnirg{{SM_{n_i}(\R)^g}}
\def\smxr[#1]{SM_{#1}(\R)}
\def\smxrg[#1]{SM_{#1}(\R)^g}
\def\bbN{ {\mathbb N}}
\def\mathand{{\qquad \mathrm{and} \qquad}}
\subjclass[2010]{Primary 47L07. Secondary 46L07, 90C05}
\date{\today}
\keywords{matrix convex set, extreme point, Arveson boundary, linear matrix inequality (LMI), spectrahedron, free linear programming}
\begin{document}

\setcounter{tocdepth}{3}
\contentsmargin{2.55em} 
\dottedcontents{section}[3.8em]{}{2.3em}{.4pc} 
\dottedcontents{subsection}[6.1em]{}{3.2em}{.4pc}
\dottedcontents{subsubsection}[8.4em]{}{4.1em}{.4pc}

\title[Arveson boundary of free quadrilaterals]{The Arveson boundary of a Free Quadrilateral is given by a noncommutative variety}

\author[E. Evert]{Eric Evert${}^{1}$}
\address{Eric Evert, Group Science, Engineering and Technology\\
 KU Leuven Kulak \\
  E. Sabbelaan 53, 8500 Kortrijk, Belgium \\
  and
  \newline
   Electrical Engineering ESAT/STADIUS\\
  KU Leuven \\
  Kasteelpark Arenberg 10, 3001 Leuven, Belgium
   }
   \email{eric.evert@kuleuven.be}
\thanks{${}^1$Research supported by the NSF grant
DMS-1500835}

\begin{abstract}
Let $ \smnrg$ denote $g$-tuples of $n \times n$ real symmetric matrices and set $\smrg = \cup_n \smnrg$. A free quadrilateral is the collection of tuples $X \in SM(\R)^2$ which have positive semidefinite evaluation on the linear equations defining a classical quadrilateral. Such a set is closed under a rich class of convex combinations called matrix convex combination. That is, given elements $X=(X_1, \dots, X_g) \in \smnonerg$ and $Y=(Y_1, \dots, Y_g) \in \smntworg$ of a free quadrilateral $\mathcal{Q}$, one has
\[
V_1^T X V_1+V_2^T Y V_2 \in \mathcal{Q} 
\]
for any contractions $V_1:\R^n \to \R^{n_1}$ and $V_2:\R^n \to \R^{n_2}$ satisfying $V_1^T V_1+V_2^T V_2=I_n$. These matrix convex combinations are a natural analogue of convex combinations in the dimension free setting. 

A natural class of extreme point for free quadrilaterals is free extreme points: elements of a free quadrilateral which cannot be expressed as a nontrivial matrix convex combination of elements of the free quadrilateral. These free extreme points serve as the minimal set which recovers a free quadrilateral through matrix convex combinations. 

In this article we show that the set of free extreme points of a free quadrilateral is determined by the zero set of a collection of noncommutative polynomials. More precisely, given a free quadrilateral $\mathcal{Q}$, we construct noncommutative polynomials $p_1,p_2,p_3,p_4$ such that a tuple $X \in SM (\R)^2$ is a free extreme point of a $\mathcal{Q}$ if and only if $X \in \mathcal{Q}$ and $p_i(X) =0 $ for $i=1,2,3,4$ and $X$ is irreducible.

In addition we establish several basic results for projective maps of free spectrahedra and for homogeneous free spectrahedra. Namely we show that that the image of a free extreme point under an invertible projective map is again a free extreme point and we extend a kernel condition for a tuple to be a free extreme point to the setting of homogeneous free spectrahedra. 
\end{abstract}

\maketitle

\section{Introduction}

This article studies the free (noncommutative) extreme points of a natural generalization of a quadrilateral to the dimension free setting, namely free quadrilaterals. Such a set arises by considering those tuples of symmetric matrices with arbitrary size which have positive semidefinite evaluation on the linear equations defining a classical quadrilateral. Our main result is a classification of the set of free extreme points of a free quadrilateral as the zero set of a collection of noncommutative polynomials. In pursuit of this result, we establish several results for homogeneous free spectrahedra and for linear and projective transformations of (homogeneous) free spectrahedra. 

The study of extreme points for dimension free sets was first initiated in Arveson's seminal works \cite{A69,A72} in the infinite dimensional setting of operator systems and has since been considered by many authors \cite{Ham79,MS98,FHL18,DK+}. Here, the main question was to determine if an operator system is completely determined by its boundary representations, objects which serve as a natural class of extreme point in this infinite dimensional context. This question was affirmatively answered in the seperable setting nearly forty years after its conception by Arveson \cite{A08} following on work of Dritchel and McCullough \cite{DM05} and Agler \cite{A88}. A few years later, Davidson and Kennedy \cite{DK15} settled the issue with a positive answer in the fully generality of Arveson's original question. 

Arveson's original question in infinite dimensions has a natural translation to the finite dimensional setting of matrix convex sets: collections of matrix tuples of all sizes which are closed under matrix (dimension free) convex combinations. Here the goal is to find the smallest class of extreme points for a matrix convex set which recovers the set through matrix convex combinations. Free extreme points, the natural analogue of Arveson's boundary representations in the finite dimensional setting, were introduced by Kleski \cite{Kls14}. While it is known that not all matrix convex sets have free extreme points, e.g. see \cite{E18,K+}, it has recently been shown that for matrix convex sets such as free quadrilaterals which arise as the positivity domain of a noncommutative polynomial, free extreme points span \cite{EH19}. Other works considering matrix convex sets and free extreme points include \cite{WW99,F00,F04,FNT17,DDSS17,PSS18,EHKM18,PP+}.

In addition to matrix convexity, the results in this article have a relationship to the burgeoning areas of noncommutative function theory and noncommutative real algebraic geometry \cite{Voi10,KVV14,MS11,Pop10,AM15,BB07,JKMMP20,SSS18,HKV+}. Here one studies noncommutative functions and polynomials whose arguments are tuples of matrices of all sizes. In particular we show that the set of free extreme points of a free quadrilateral may be expressed as the intersection of the zero set of a collection of noncommutative polynomials, i.e. a noncommutative variety, with the free quadrilateral. That is, given a free quadrilateral $\mathcal{Q}$, we construct noncommutative polynomials $p_1,p_2,p_3,p_4$ such that a tuple of symmetric $n \times n$ matrices $X = (X_1,X_2)$ is, up to a technical assumption, a free extreme point of $\mathcal{Q}$ if and only if $p_1(X)=p_2(X)=p_3(X) = p_4(X) =0$ and $X \in \mathcal{Q}$. 

As an example, for the free quadrilateral with defining relations 
\[
\begin{array}{rl}
I+3X_1+2X_2 \succeq 0 \qquad \quad & \qquad I-\frac{X_1}{2} + 3 X_2 \succeq 0  \\
I-X_1-X_2 \succeq 0 \qquad \quad & \qquad I-\frac{3X_1}{2} - 2X_2 \succeq 0,
\end{array}
\]
one has that $X=(X_1,X_2) \in SM_n (\R)^2$ is a free extreme point of $\mathcal{Q}$ if and only if $X$ is irreducible, $X \in \mathcal{Q}$, and
\[
\begin{array}{lclcl}
p_1 (X) & = & 832 + 3098 X_1 + 656 X_2 - 1547 X_1  X_1 - 4228 X_1  X_2  \\
& & - 4228 X_2  X_1 - 3003 X_1  X_1  X_1 - 4522 X_1 X_2  X_1  &  =  & 0 \\
 p_2 (X) & = & 3465 X_1  X_2 - 3465 X_2  X_1 - 4719 X_1  X_1  X_2 \\
 & &  - 7106 X_1 X_2  X_2 + 4719 X_2  X_1 X_1 + 7106 X_2 X_2 X_1 & = &  0 \\
 p_3 (X) & = & 596 + 24 X_1 - 649 X_2 + 1995 X_1 X_2 + 1995 X_2 X_1\\
 & &  - 5957 X_2 X_2  + 3003 X_2  X_1 X_2 + 4522 X_2 X_2 X_2 & = & 0 \\
  p_4 (X) & = & 5554 + 16521 X_1 + 5554 X_2 - 10207 X_1 X_1 - 15863 X_1  X_2 \\
 & & - 19328 X_2  X_1  - 6644 X_2  X_2 - 13728 X_1 X_1 X_1 \\
 & & - 4719 X_1 X_1 X_2 - 20672 X_1 X_2 X_1 - 7106 X_1 X_2 X_2 &  = & 0.
\end{array}
\]

In the remainder of the introduction we introduce our basic definitions and notation and give a formal statement of our main results. 

	\ssec{Free convex sets and free spectrahedra}

	\sssec{Matrix Convex Sets}
	
	Let $\smnrg$ \index{$\smnrg$} denote the set of $g$-tuples of real symmetric $n \times n$ matrices and define $\smrg:=\cup_n \smnrg$ . That is, an element $X \in \smnrg$ is a tuple 
	\[
	X=(X_1, X_2, \dots X_g)
	\]
	where $X_i \in \R^{n \times n}$ and $X_i=X_i^T$ for each $i=1,2, \dots, g$. Similarly we let $M_{m \times n} (\R)^g$ \index{$M_{m \times n} (\R)^g$} and $M_{n} (\R)^g$ denote the sets of $g$-tuples of $m \times n$ matrices and $g$-tuples of $n \times n$ matrices with real entries, respectively. Given a matrix tuple $X \in \smnrg$ and matrices $V \in M_{m \times n} (\R)$ and $W \in M_{n \times p} (\R)$, we let $V X W$ denote the coordinate wise product 
	\[
	V X W = (V X_1 W, \dots, V X_g W) \in M_{m \times p} (\R)^g.
	\]
	
	Given a finite collection of tuples $\{X^i\}_{i=1}^\ell$ with $X^i \in \smnirg$ for each $i=1,\dots, \ell$, a \df{matrix convex combination of $\{X^i\}_{i=1}^\ell$} is a finite sum of the form
	\[
	\sum_{i=1}^\ell V_i^T X^i V_i \quad \quad \quad \quad \sum_{i=1}^\ell V_i^T V_i=I_n.
	\]
where $V_i \in M_{n_i \times n} (\R)$ for each $i=1,2, \dots, \ell$. We emphasize that the matrix tuples $X^i$ can be of different sizes. That is, the $n_i$ need not be equal.

	A set $K \subset \smrg $ is \df{matrix convex} if it is closed under matrix convex combinations. The \df{matrix convex hull} of a set $K \subset \smrg$, denoted $\comat (K)$  is the set of all matrix convex combinations of elements of $K$. Equivalently, the matrix convex hull of $K$ is the smallest matrix convex set which contains $K$. 
	
	For a subset $K \subset \smrg$, we call the set $K(n):= K \cap \smnrg$ the set $K$ at level $n$. Say $K$ is \df{closed} if $K(n)$ is closed for each $n$ and say $K$ is \df{bounded} if there exists a constant $C > 0$ such that $C - \sum_{i=1}^g X_i^2 \succeq 0$ for all $X \in K$. In the case that $K$ is matrix convex, one may show that $K$ is bounded if and only if $K(1)$ is bounded, e.g. see \cite{PSS18}.

		\subsubsection{Free Spectrahedra and Linear Matrix Inequalities} 
A prototypical example of matrix convex sets are those defined by a linear matrix inequality, namely free spectrahedra. Given a $g$-tuple $A$ of $d \times d$ real symmetric matrices, let $\Lambda_A$ denote the \df{homogeneous linear pencil}
\[
\Lambda_A (x)=A_1 x_1+ \cdots + A_g x_g
\]
and let $L_A$ denote the \df{monic linear pencil}
\beq
\label{eq:MonicLinPencilDef}
L_A (x)=I_d + A_1 x_1+ \cdots +A_g x_g .
\eeq
For a positive integer $n \in \bbN$ and a $g$-tuple $X \in \smnkg$, the \df{evaluation} of the monic linear pencil $L_A$ on $X$ is
\[
L_A (X):=I_{dn}+\Lambda_A (X)=I_{dn}+A_1 \otimes X_1 + \cdots + A_g \otimes X_g
\index{$L_A(X)$}
\]
where $\otimes$ denotes the Kronecker product. A \df{linear matrix inequality} is an inequality of the form $L_A (X) \succeq 0$. 

The \df{free spectrahedron at level $n$}, denoted $\cD_A (n)$, is the set
\[
\cD_A (n)=\{X \in \smnrg |\ L_A (X) \succeq 0 \} \index{$\cD_A (n)$},
\]
and the \df{free spectrahedron} $\cD_A$ is the dimension free set $\cD_A:= \cup_n \cD_A(n) \subset \smrg$. In other words,
\[
\cD_A=\{X \in \smrg |\ L_A (X) \succeq 0 \} \index{$\cD_A$}.
\]
It straightforward to show that all free spectrahedra are matrix convex.

As a consequence of \cite{EW97}, every closed matrix convex set may be expressed as a (perhaps infinite) intersection of free spectrahedra. In addition, every matrix convex set which is the positivity domain of a noncommuative polynomial is a free spectrahedron \cite{HM12}.

\sssec{Free polytopes and free quadrilaterals}

A special class of free spectrahedra are those which may be defined by a tuple of matrices which is simultaneously diagonalizable. Such free spectrahedra are called \df{free polytopes}. These free polytopes serve as a natural generalization of linear programming to the free setting. 

Of particular interest in this article are free quadrilaterals. Say a free spectrahedron $K \subset SM (\R)^2$ is a \df{free quadrilateral} if  $K(1) \subset \R^2$ is a bounded quadrilateral in the classical sense and there is a tuple $A=(A_1,A_2)$ of $4 \times 4$ real diagonal matrices such $K = \cD_A$. Equivalently, a free spectrahedron $K \subset SM (\R)^2$ is a free quadrilateral if $K$ is bounded and has a minimal defining tuple $A \in SM_4 (\R)^2$ of $4 \times 4$ diagonal matrices. A formal definition of a minimal defining tuple is given in Section \ref{sec:MinDefining}.

	\subsection{Extreme Points of Free Spectrahedra} There are many notions of extreme points for matrix convex sets. Two particularly notable types are classical (Euclidean) and free (Arveson) extreme points, which respectively represent the least and most restricted types of extreme points for matrix convex sets. 

	Given an element $X \in K(n)$ of a matrix convex set $K \subset \smrg$, say $X$ is a \df{Euclidean extreme point} of $K$ if $X$ is a classical extreme point of $K(n)$, i.e. if $X$ cannot be expressed as a nontrivial convex combination of elements of $K(n)$. We let $\euc K$ \index{$\euc K$} denote the set of Euclidean extreme points of $K$.

While Euclidean extreme points are natural to consider when working with classical convex combinations, the additional freedom in matrix convex combinations often allow a Euclidean extreme point of a given matrix convex set to be expressed as a nontrivial matrix convex combination of elements of the set. In contrast, a free extreme point cannot be expressed as a nontrivial matrix convex combination.

Before giving a formal definition for free extreme points, we give a brief definition. Given tuples $X,Y \in \smnrg$, if  there is an orthogonal (i.e. a real valued unitary) matrix $U$ so that
	\[
	U^T X U =(U^T X_1 U, \dots, U^T X_g U)= (Y_1, \dots, Y_g)
	\]
	then we say $X$ and $Y$ are \df{unitarily equivalent}. A subset $E \subset \cD_A$ of a free spectrahedron is \df{closed under unitary equivalence} if $X \in E$ and $X$ is unitarily equivalent to $Y$ implies $Y \in E$. 
	
	A tuple $X \in K(n)$ is a \df{free extreme point} of $K$ if whenever $X$ is written as a  matrix convex combination 
	\[
	X=\sum_{i=1}^k V_i^T Y^i V_i \quad \quad \quad \quad \sum_{i=1}^k V_i^T V_i =I_n
	\]
	with $Y^i \in K(n_i)$ and $V_i \neq 0$ for each $i$, then for all $i$ either $n_i=n$ and  $X$ is unitarily equivalent to $Y^i$ or $n_i > n$ and there exists a tuple $Z^i \in K$ such that  $X \oplus Z^i$ is unitarily equivalent to $Y^i$. We let $\abs K$ \index{$\abs K$} denote the set of free extreme points of $K$.
	
Free extreme points are a natural type of extreme point for free spectrahedra in that they are the minimal subset of a bounded free spectrahedron which recovers the free spectrahedron through matrix convex combinations.
	
		\begin{thm} \cite[Theorem 1.1]{EH19}
		\label{thm:AbsSpanMin}
		Let $A \in \smdrg$ such that $\cD_A$ is a bounded free spectrahedron. Then $\cD_A$ is the matrix convex hull of its free extreme points. Furthermore, if $E \subset \cD_A$ is a set of irreducible tuples which is closed under unitary equivalence and whose matrix convex hull is equal to $\cD_A$, then $E$ must contain the free extreme points of $\cD_A$.
	\end{thm}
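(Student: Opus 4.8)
The plan is to derive both assertions from the dilation theory for bounded free spectrahedra developed in \cite{EHKM18}, taking two facts from there as black boxes. The first is the \emph{finite-dimensional Arveson dilation theorem}: since $\cD_A$ is bounded, every $X \in \cD_A$, say of size $n$, is a compression of some \emph{Arveson extreme point} $Y$ of $\cD_A$ of finite size $N$; that is, there is an isometry $W \colon \R^n \to \R^N$ with $X = W^T Y W$, where being Arveson extreme means $Y$ has no nontrivial dilation inside $\cD_A$ (if each $\widehat Y_j = \bigl(\begin{smallmatrix} Y_j & \beta_j \\ \beta_j^T & \gamma_j\end{smallmatrix}\bigr)$ and the tuple $\widehat Y \in \cD_A$, then $\beta_j = 0$ for all $j$). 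The second is the \emph{characterization} of the free extreme points of a bounded $\cD_A$ as precisely its irreducible Arveson extreme points. Beyond these I only use that the Arveson extreme points of $\cD_A$ are closed under direct summands --- a nontrivial dilation of $Y'$ inside $\cD_A$ would, after reordering coordinates, yield a nontrivial dilation of $Y' \oplus Y''$ inside $\cD_A$, using that $\cD_A$ is closed under direct sums --- after which the rest is bookkeeping with matrix convex combinations.

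For the spanning statement I would argue as follows. The inclusion $\comat(\abs\cD_A) \subset \cD_A$ is immediate, since $\abs\cD_A \subset \cD_A$ and $\cD_A$ is matrix convex. For the reverse, fix $X \in \cD_A(n)$ and dilate it to an Arveson extreme $Y \in \cD_A(N)$ with $X = W^T Y W$ for an isometry $W$. Decompose $Y \simu Y^1 \oplus \cdots \oplus Y^m$ with each $Y^j$ irreducible; each $Y^j$, as a direct summand of the Arveson extreme point $Y$, is again Arveson extreme, hence (being irreducible) $Y^j \in \abs\cD_A$. The direct sum $Y^1 \oplus \cdots \oplus Y^m$ is a matrix convex combination of the $Y^j$ --- use the block coordinate projections $V_j$, which satisfy $\sum_j V_j^T V_j = I_N$ --- so it lies in $\comat(\abs\cD_A)$, and since matrix convex sets are closed under unitary equivalence, $Y \in \comat(\abs\cD_A)$. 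Finally $X = W^T Y W$ is a one-term matrix convex combination of $Y$, so $X \in \comat(\abs\cD_A)$. Hence $\cD_A \subset \comat(\abs\cD_A)$, and equality follows.

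For the minimality statement I would take a set $E \subset \cD_A$ of irreducible tuples, closed under unitary equivalence, with $\comat(E) = \cD_A$, and let $X \in \abs\cD_A$ be a free extreme point, of size $n$. Since $X \in \cD_A = \comat(E)$ and the matrix convex hull consists exactly of matrix convex combinations, I may write $X = \sum_{i=1}^k V_i^T Y^i V_i$ with $\sum_{i=1}^k V_i^T V_i = I_n$, each $Y^i \in E$, and, after discarding any terms with $V_i = 0$, each $V_i \ne 0$; note $k \ge 1$ because $I_n \ne 0$. Apply the definition of free extreme point to this representation: for each $i$ either $n_i = n$ and $X \simu Y^i$, or $n_i > n$ and $X \oplus Z^i \simu Y^i$ for some $Z^i \in \cD_A$. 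The second alternative is impossible --- there $Z^i$ has size $n_i - n \ge 1$, so $X \oplus Z^i$ is a nontrivial direct sum and therefore reducible, contradicting the irreducibility of $Y^i \in E$. Hence the first alternative holds for every $i$, and since there is at least one index, $X \simu Y^i \in E$; as $E$ is closed under unitary equivalence, $X \in E$. Therefore $\abs\cD_A \subset E$.

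The main obstacle is the first black box: producing, inside a bounded $\cD_A$, a \emph{finite}-dimensional dilation of a given $X$ that admits no further nontrivial dilation. Forming one-step dilations, each adding a single row and column, is routine, and the real issue is that the iteration terminates. This rests on a dimension count driven by boundedness of $\cD_A$ --- a proper one-step dilation enlarges the pertinent kernel of $L_A$, and boundedness caps how far it can grow --- and this, together with the characterization of free extreme points as irreducible Arveson extreme points, is exactly the dilation-theoretic machinery of \cite{EHKM18}. Everything else above is soft manipulation of matrix convex combinations.
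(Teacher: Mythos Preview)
The paper does not give its own proof of this theorem; it is quoted verbatim from \cite{EH19} and left as a citation. So there is no ``paper's proof'' to compare against beyond the original source.

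Your argument is structurally correct and is essentially the outline of the proof in \cite{EH19}: dilate $X$ to an Arveson extreme $Y$, split $Y$ into irreducibles (each still Arveson extreme, hence free extreme), and express $X$ as a matrix convex combination of those summands. Your minimality argument is also correct and is the standard one --- it uses nothing beyond the definition of free extreme and irreducibility of the $Y^i$.

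There is, however, a substantive attribution issue that borders on circularity. Your ``first black box'' --- that every $X$ in a bounded real free spectrahedron dilates, in finitely many steps, to a finite-dimensional Arveson extreme point --- is \emph{not} available from \cite{EHKM18}. That paper develops the dilation framework and proves the characterization free extreme $=$ irreducible Arveson extreme (over $\C$; the $\R$ version is in \cite{EH19}), but the termination of the dilation process for bounded $\cD_A$ is precisely the main technical contribution of \cite{EH19}, i.e., of the very theorem you are proving. You correctly identify this as the crux in your final paragraph, and your sketch of the dimension-count/kernel-growth mechanism is accurate; just be aware that citing \cite{EHKM18} for it is wrong, and that once you assume it you have essentially assumed the theorem. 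Everything else in your write-up is, as you say, soft bookkeeping with matrix convex combinations.
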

	  
	  	\sssec{Irreducibility of matrix tuples}
Free extreme points are irreducible as tuples of matrices, a notion we now define.	Given a matrix $M \in \R^{n \times n}$, a subspace $N \subset \R^n$ is a \df{reducing subspace} if both $N$ and $N^\perp$ are invariant subspaces of $M$. That is, $N$ is a reducing subspace for $M$ if $MN \subseteq N$ and $MN^\perp \subseteq N^\perp$. A tuple $X \in \smnrg$ is \df{irreducible} (over $\R$) if the matrices $X_1, \dots, X_g$ have no common reducing subspaces in $\R^n$; a tuple is \df{reducible} (over $\R$) if it is not irreducible (over $\R$).

\ssec{Free extreme points and the Arveson boundary} \ 
Free extreme points are closely related to the classical dilation theoretic Arveson boundary. Say a tuple $X$ in a free spectrahedron $\cD_A$ is an \df{Arveson extreme point} of $\cD_A$ if
	\beq
	\label{eq:arvdef}
	Y= \begin{pmatrix} X & \beta \\
	\beta^T & \gamma \end{pmatrix}
	\in \cD_A
	\eeq
	implies $\beta=0$. The set of Arveson extreme points of a free spectrahedron $\cD_A$ is called the \df{Arveson boundary} of $\cD_A$ and is denoted by $\arv \cD_A$ \index{$\arv \cD_A$}
	
	The following theorem illustrates the relationship between the free, Arveson, and Euclidean extreme points of a free spectrahedron.
	
	\begin{thm}
		\label{thm:ArvImpliesEuc}
		Let $A \in \smdrg$ and let $\cD_A$ be the associated free spectrahedron.
		\ben 
		\item \label{it:FreeIsArvIrred} A tuple $X \in \cD_A (n)$ is a free extreme point of $\cD_A$ if and only if $X$ is an irreducible Arveson extreme point of $\cD_A$.  
		\item \label{it:ArvIsEuc} If $X \in \cD_A (n)$ is an Arveson extreme point of $\cD_A$, then $X$ is a Euclidean extreme point of $\cD_A$.
		
		\item \label{it:KerCondition} A tuple $X \in \cD_A$ is an Arveson extreme point of $\cD_A$ if and only if the only tuple $\beta \in M_{n \times 1} (\R)^g$ satisfying
		\beq
		\label{eq:ArvesonKernelClassical}
		\ker L_A (X) \subseteq \ker \Lambda_A (\beta^T) \qquad \mathrm{is \ } \beta =0.
		\eeq
		
		\een
	\end{thm}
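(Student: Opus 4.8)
The plan is to reduce all three parts to one piece of Kronecker bookkeeping that I would set up first. For a tuple $Y=\begin{pmatrix} X & \beta\\ \beta^T & \gamma\end{pmatrix}$ with $X\in SM_n(\R)^g$, $\beta$ a (possibly rectangular) tuple of matrices, and $\gamma$ a square tuple, reindexing $\R^{d(n+p)}\cong\R^{dn}\oplus\R^{dp}$ gives a fixed permutation $\Pi$ (depending only on $d$, $n$, $p$) with
\[
\Pi\, L_A(Y)\, \Pi^T=\begin{pmatrix} L_A(X) & \Lambda_A(\beta)\\ \Lambda_A(\beta)^T & L_A(\gamma)\end{pmatrix}
\qquad\text{and}\qquad
\Pi\, L_A(X\oplus Z)\,\Pi^T=L_A(X)\oplus L_A(Z),
\]
where $\Lambda_A(\beta)^T=\Lambda_A(\beta^T)$ since each $A_j$ is symmetric. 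I would pair this with two elementary facts about positive semidefinite block matrices: for $P,R\succeq 0$, positive semidefiniteness of $\begin{pmatrix} P & Q\\ Q^T & R\end{pmatrix}$ forces $\ker P\subseteq\ker Q^T$ (equivalently $\operatorname{ran}Q\subseteq\operatorname{ran}P$); and conversely, if $\operatorname{ran}Q\subseteq\operatorname{ran}P$ then $\begin{pmatrix} P & tQ\\ tQ^T & I\end{pmatrix}\succeq 0$ for all sufficiently small $t$.

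With this in hand, (3) is essentially immediate: if $X$ is not Arveson extreme, any witnessing dilation gives, via the block identity and the range fact, a nonzero $\beta$ with $\ker L_A(X)\subseteq\ker\Lambda_A(\beta^T)$; conversely, given such a nonzero $\beta$, rescaling it by a small $t$ and taking $\gamma=0$ produces $\begin{pmatrix} X & t\beta\\ t\beta^T & 0\end{pmatrix}\in\cD_A$, so $X$ is not Arveson extreme. For (2) I would go through (3): writing $X=\tfrac12(X'+X'')$ with $X',X''\in\cD_A(n)$ and $W=\tfrac12(X'-X'')$, one has $L_A(X)\pm\Lambda_A(W)\succeq 0$, hence $\ker L_A(X)\subseteq\ker\Lambda_A(W)$ by a standard perturbation argument; then for each $v\in\R^n$ the column tuple $Wv=(W_1v,\dots,W_gv)$ satisfies $\ker L_A(X)\subseteq\ker\Lambda_A((Wv)^T)$ (using $\Lambda_A(W)(I_d\otimes v)=\Lambda_A(Wv)$ and symmetry of the $W_j$), so (3) forces $Wv=0$ for every $v$, i.e.\ $W=0$ and $X'=X''=X$.

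For (1) I would prove three implications. \emph{Free $\Rightarrow$ irreducible:} a nontrivial common reducing subspace writes $X$, up to unitary equivalence, as $X'\oplus X''$ with $X',X''\in\cD_A$ of sizes $<n$, and $X'\oplus X''=\tilde V_1^TX'\tilde V_1+\tilde V_2^TX''\tilde V_2$ for the coordinate injections is a matrix convex combination violating the free-extreme condition. \emph{Free $\Rightarrow$ Arveson:} given $Y=\begin{pmatrix} X&\beta\\\beta^T&\gamma\end{pmatrix}\in\cD_A$, the compression $X=V^TYV$ with $V=\binom{I_n}{0}$ is a one-term matrix convex combination with $Y$ of size $n+1>n$, so the free-extreme condition produces $Z\in\cD_A(1)$ with $Y\cong X\oplus Z$; comparing $\tr(Y_\ell)$ and $\tr(Y_\ell^2)$ in the two descriptions of $Y$ gives $\gamma_\ell=Z_\ell$ and then $2\|\beta_\ell\|^2=0$, so $\beta=0$. \emph{Irreducible Arveson $\Rightarrow$ free:} given $X=\sum_{i=1}^k V_i^TY^iV_i$ with $\sum_i V_i^TV_i=I_n$ and all $V_i\neq 0$, set $Y=\bigoplus_i Y^i\in\cD_A$ and $V=\operatorname{col}_i(V_i)$, an isometry with $V^TYV=X$; in an orthonormal basis extending $\operatorname{ran}V$ one gets $Y\cong\begin{pmatrix} X & B\\ B^T & C\end{pmatrix}$, and applying (3) column-by-column to $B$ (through the block identity) forces $B=0$, so $\operatorname{ran}V$ is reducing for $Y$. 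Reading off the block structure gives the intertwiners $Y^i_\ell V_i=V_iX_\ell$ for all $i,\ell$; irreducibility of $X$ makes each $V_i$ injective, and, since each $Y^i$ is symmetric, a Schur's-lemma argument (the symmetric part of the commutant of an irreducible symmetric tuple is $\R I$) promotes $\operatorname{ran}V_i$ to a reducing subspace of $Y^i$ on which $Y^i$ is \emph{unitarily} equivalent to $X$, whence $Y^i\cong X\oplus Z^i$ with $Z^i\in\cD_A$ of size $n_i-n$ --- exactly the free-extreme condition at index $i$. Combining the three implications yields $X\in\abs\cD_A$ if and only if $X$ is an irreducible Arveson extreme point.

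The hard part, I expect, is this last implication: the free-extreme condition quantifies over \emph{every} term $Y^i$, so one must show that each $V_i$ (not just one distinguished term) gives a trivial dilation of $X$, and the step that really uses irreducibility is upgrading the similarity $Y^i_\ell V_i=V_iX_\ell$ to a genuine unitary equivalence via Schur's lemma. The only other subtlety is bookkeeping: lining up the column form of the kernel condition in (3) against the matrix-valued situations arising in (1) and (2), which is precisely what the Kronecker rearrangement in the first step is designed to handle.
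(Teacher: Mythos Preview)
Your argument is correct and self-contained. The paper itself does not prove this theorem: it simply cites \cite{EHKM18} for items (1) and (2) (with \cite{EH19} for the adaptation of (1) to $\R$) and \cite{EH19} for item (3). What you have written is essentially the standard argument from those references, carried out directly.

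A couple of minor remarks on points you handled correctly but that are worth flagging since they are exactly the places where the real case differs from the complex one. In the ``irreducible Arveson $\Rightarrow$ free'' implication you use that the \emph{symmetric} commutant of an irreducible tuple of real symmetric matrices is $\R I$; this is true (eigenspaces of a symmetric commuting matrix are reducing), even though the full real commutant need not be one-dimensional, and it is precisely the reason the argument survives over $\R$. Likewise, your injectivity step for $V_i$ silently rules out $n_i<n$, which is needed to match the dichotomy in the definition of free extreme. Both points are fine as you wrote them; I mention them only because they are the substantive content behind the paper's remark that \cite{EH19} adapts the complex proof of (1) to the real setting.
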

	\begin{proof}

	\cite[Theorem 1.1]{EHKM18} proves \eqref{it:FreeIsArvIrred} and \eqref{it:ArvIsEuc} when working over $\C$. The \cite{EHKM18} proof of \eqref{it:ArvIsEuc} can be used over $\R$ without change. An adapted proof of \eqref{it:FreeIsArvIrred} which works over $\R$ is given by \cite[Theorem 1.2]{EH19}. Item \eqref{it:KerCondition} is proved as \cite[Lemma 2.1 (3)]{EH19}.  
	\end{proof}
	
	\ssec{Main Results}
\sssec{Homogeneous Free spectrahedra and Projective maps}

Key ingredients in our study of free quadrilaterals are homogeneous free spectrahedra and projective maps of free spectrahedra, and we develop several basic results in each of these directions. Given a $g+1$-tuple $(A_0,A)=(A_0,A_1,\dots,A_g) \in \smdrgp$, we define a homogeneous free spectrahedron to be the set of $(X_0,X)=(X_0,X_1,\dots,X_g) \in \smrgp$ which satisfy $\Lambda_{(A_0,A)} (X_0,X) \succeq 0$. 

We focus our study on the natural class of homogeneous free spectrahedra which contain the tuple $(1,0,\dots,0) \in \R^{g+1}$ as an interior point. We say such a homogeneous free spectrahedron is \textit{positive} and, under light assumptions, we show that a homogeneous free spectrahedron is positive if and only if $A_0 \succ 0$. In addition we show that homogeneous free spectrahedra satisfying these assumptions arise naturally as the \textit{homogenization} of a free spectrahedron. Our main result for positive homogeneous free spectrahedra is an analogue of Theorem \ref{thm:ArvImpliesEuc} \eqref{it:KerCondition}. That is, we establish a kernel condition in the spirit of equation \eqref{eq:ArvesonKernelClassical} for a tuple to be an Arveson extreme point of a positive homogeneous free spectrahedron. See Section \ref{sec:HomSpecBasics} for formal definitions and details.  

Classically, a projective map on $\R^g$ may be described as a map which is linear on the projective space $P(\R^g)$. Informally, a projective map between free spectrahedra $\cD_A$ and $\cD_B$ is a map which is linear between the homogenizations of $\cD_A$ and $\cD_B$. In Theorem \ref{thm:ArvMappings}, we show that {\it if $\cP$ is an invertible projective map which maps a bound free spectrahedron $\cD_A$ onto a bound free spectrahedron $\cD_B$, then $\cP$ maps the Arveson boundary of $\cD_A$ onto the Arveson boundary of $\cD_B$.} In addition we  show that for any two free quadrilaterals $\mathcal{Q}_1$ and $\mathcal{Q}_2$, {\it there exists an invertible projective map $\cP$ such that $\cP (\mathcal{Q}_1) = \mathcal{Q}_2$} thus extending a well known classical result to the free setting. Projective maps of free spectrahedra are discussed in Section \ref{sec:LinearAndProjectiveTransformations}.

	\sssec{Noncommutative polynomials and varieties}
	
	A \df{noncommutative polynomial} in $g$ variables is a finite sum of the form
	\[
	p(x) = \sum_{w} \alpha_w w \in \R \langle x \rangle
	\]
	where $\alpha_w \in \R$, and the $w$ are words in the noncommutative variables $x = (x_1, \dots, x_g)$. The \df{degree} of a word is the length of the word, and the degree of a noncommutative polynomial is given by its highest degree word. For example, $x_1 x_2 + 3 x_2 x_1 x_2$ is a noncommutative polynomial of degree $3$. As with linear matrix inequalities, the \df{evaluaion} a noncommutative polynomial $p(x)$ on a matrix tuple $X \in \smrg$ is obtained by replacing $x_i \to X_i$.
	
	A set free set $\Gamma \subset \smrg$ is a \df{noncommutative variety} if it is the zero set of a finite collection of noncommutative polynomials. That is, if $\Gamma$ is a noncommutative variety, then there exists a finite collection of noncommutative polynomials $\{p_1, \dots, p_\ell\}$ such that
	\[
	\Gamma = \{X \in \smrg | \ p_i (X) = 0 \ \mathrm{for \ all\ } i=1,\dots,\ell\}.
	\]
	One may readily verify that a noncommutative variety is closed under direct sums and simultaneous unitary conjugation. 

 Our main result regarding noncommutative polynomials is that the Arveson boundary of a free quadrilateral is determined by a noncommutative variety.
	\begin{thm}
\label{thm:ArvIsVariety}
Let $\cD_A$ be a free quadrilateral. Then there exist noncommutative polynomials polynomials $p_1,p_2,p_3,p_4$ in the noncommuting variables $x_1$ and $x_2$ of degree no more than three such that $X \in \arv \cD_A$ if and only if $X \in \mathcal{Q}$ and $p_i (X)=0$ for each $i=1,\dots,4$. 
\end{thm}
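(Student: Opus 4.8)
The plan is to reduce the problem to a computation about one standard free quadrilateral and then transport the answer by a projective map. First, recall from the introduction (the promised result in Section~\ref{sec:LinearAndProjectiveTransformations}) that any two free quadrilaterals are projectively equivalent, and that (by Theorem~\ref{thm:ArvMappings}) an invertible projective map $\cP$ carrying a bounded free spectrahedron $\cD_A$ onto a bounded free spectrahedron $\cD_B$ carries $\arv\cD_A$ onto $\arv\cD_B$. Since a projective map acts on a matrix tuple $X$ as a M\"obius-type transformation $X\mapsto \cP(X)$ whose components are (operator) rational expressions that are linear after clearing a common invertible denominator $L(X)^{-1}$ of the form $L(X)=I+C_1\otimes X_1+C_2\otimes X_2$, the pullback of a polynomial equation $q(\cP(X))=0$ is, after multiplying through by suitable powers of $L(X)$, again a polynomial equation in $X$. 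Thus it suffices to prove the theorem for a single convenient model free quadrilateral $\cQ_0$ (for instance the one with defining matrices the $4\times4$ diagonal matrices coming from the square, or the explicit example displayed in the introduction) and then push the four polynomials $p_1,\dots,p_4$ forward through $\cP^{-1}$, keeping track of degrees.

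Next I would characterize $\arv\cD_A$ for the model quadrilateral via the kernel condition of Theorem~\ref{thm:ArvImpliesEuc}\eqref{it:KerCondition}: $X\in\cD_A(n)$ is an Arveson extreme point iff the only $\beta\in M_{n\times1}(\R)^2$ with $\ker L_A(X)\subseteq\ker\Lambda_A(\beta^T)$ is $\beta=0$. Because $A=(A_1,A_2)$ is a tuple of $4\times4$ \emph{diagonal} matrices, $L_A(X)=\operatorname{diag}\big(I_n+a_{j1}X_1+a_{j2}X_2\big)_{j=1}^{4}$ is block diagonal with four $n\times n$ blocks $\ell_j(X):=I_n+a_{j1}X_1+a_{j2}X_2$, and $\ker L_A(X)=\bigcap_{j=1}^4\ker\ell_j(X)$, while $\Lambda_A(\beta^T)$ is block diagonal with blocks $\lambda_j(\beta^T)=a_{j1}\beta_1^T+a_{j2}\beta_2^T$. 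So the condition becomes: $\bigcap_j\ker\ell_j(X)\subseteq\bigcap_j\ker\lambda_j(\beta^T)$ forces $\beta=0$. For a bounded free quadrilateral one knows (classical quadrilateral geometry lifted to the free setting, together with boundedness) that at a boundary tuple exactly the right number of the $\ell_j(X)$ are "active"; the analysis splits according to which pair of the four edges is met, and on each such piece the kernel inclusion can be rewritten as a \emph{finite-dimensional linear} condition that is violated precisely when a certain explicit polynomial matrix expression in $X$ acting on $\ker\ell_j(X)$ is nonzero. Concretely, I expect each $p_i$ to arise as (a symmetrization of) an expression of the form $\ell_k(X)\,\text{adj-type operator}\,\ell_j(X)$ so that $p_i(X)v=0$ for all $v\in\ker\ell_j(X)$ is equivalent to the desired containment of kernels; one then checks that any $X\in\cQ$ with all four $p_i(X)=0$ has no nonzero $\beta$, and conversely that Arveson extreme points satisfy all four. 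Degree bookkeeping: each $\ell_j$ is degree one and the adjugate/cofactor of a $2\times2$ pencil contributes at most degree one more, with one extra degree from the final multiplication, giving total degree at most three, matching the claimed bound.

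The main obstacle is the second step: producing the four polynomials \emph{explicitly and uniformly} and proving the equivalence in both directions without case-chasing that balloons with $n$. The difficulty is that the kernel $\ker\ell_j(X)$ has dimension depending on $X$ and $n$, so "$p_i(X)$ annihilates $\ker\ell_j(X)$" must be encoded by a polynomial identity valid for all $n$ simultaneously — this is exactly where one needs a dimension-free algebraic certificate rather than a linear-algebra argument at fixed $n$. I anticipate handling this by exploiting that on the relevant boundary stratum $\ell_j(X)$ has a one-dimensional-per-irreducible-block kernel (boundedness plus the quadrilateral structure forces the generic rank drop to be $1$), so that $\ker\ell_j(X)=\operatorname{ran}\,\operatorname{adj}\ell_j(X)$ in a suitable sense, and the product $\ell_k(X)\operatorname{adj}\ell_j(X)$ is then a genuine polynomial whose vanishing is equivalent to $\operatorname{ran}\operatorname{adj}\ell_j(X)\subseteq\ker\ell_k(X)$; iterating over the finitely many active-edge configurations and intersecting gives the four $p_i$. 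The irreducibility hypothesis in the statement of the main classification (though not literally in Theorem~\ref{thm:ArvIsVariety}) is what guarantees the rank-drop is exactly $1$ and lets the adjugate trick go through; for Theorem~\ref{thm:ArvIsVariety} as stated I would verify that the $p_i$ constructed this way still cut out precisely $\arv\cD_A\cap\cQ$ even on reducible tuples, using that $\arv\cD_A$ and the noncommutative variety are both closed under direct sums. Finally, I would record the explicit $p_1,\dots,p_4$ for the running example as a sanity check that the construction, after transport by $\cP$, reproduces the polynomials displayed in the introduction.
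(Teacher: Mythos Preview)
Your high-level strategy---reduce to a single model quadrilateral via an invertible projective map and invoke Theorem~\ref{thm:ArvMappings}---is exactly the paper's opening move. The paper takes the free square $\cC$ as its model, where one already knows $\arv\cC=\{X:I-X_1^2=0,\ I-X_2^2=0\}$, and then pulls these two equations back through the projective map. The pullback is done at the \emph{homogeneous} level: one writes $c_i(x_0,x)=x_0-x_i x_0^{-1}x_i$ on $\hom(\cC)$, composes with the linear map $\cT_V$, and then sets $x_0=1$. Because of the $x_0^{-1}$, what comes out are two genuine noncommutative \emph{rational} functions $r_1,r_2$ in $x_1,x_2$, not polynomials. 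The decisive step---and the one your sketch does not supply---is a noncommutative Gr\"obner basis computation (carried out symbolically, with the entries of $V$ left as parameters) showing that the relations $r_1=r_2=0$ generate an ideal with a polynomial basis $\{p_1,p_2,p_3,p_4\}$ of degree at most three. This computation is what simultaneously produces the four polynomials, certifies the equivalence $r_1(X)=r_2(X)=0\Leftrightarrow p_i(X)=0$, and yields the degree bound.

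Your proposed substitute for this step, ``multiply through by suitable powers of $L(X)$,'' does not work as stated in the noncommutative setting: the inverse you must clear is $(v_{11}+v_{12}x_1+v_{13}x_2)^{-1}$, which does not commute with the other factors, so left/right multiplication by powers of it does not obviously produce polynomials with the \emph{same} zero set (you get one containment, not both), and gives no control on degree. Your alternative direct attack via the kernel condition also has problems. First, for diagonal $A$ the pencil $L_A(X)$ is block diagonal, so $\ker L_A(X)=\bigoplus_{j=1}^4\ker\ell_j(X)$ inside $\R^{4n}$, not $\bigcap_j\ker\ell_j(X)$; the Arveson kernel condition becomes the four separate containments $\ker\ell_j(X)\subseteq\ker\lambda_j(\beta^T)$. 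Second, the ``adjugate'' you invoke is the adjugate of the $n\times n$ matrix $\ell_j(X)$, whose entries have degree $n-1$ in $X$, so this cannot give a dimension-free polynomial of bounded degree; there is no $2\times2$ pencil in sight once you evaluate at $X\in SM_n(\R)^2$. The paper avoids all of this by never analysing the kernel condition for the general quadrilateral: it imports the answer from the square and lets the Gr\"obner basis do the algebra.
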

The proof of Theorem \ref{thm:ArvIsVariety} is constructive, and a Mathematica notebook which computes these noncommutative polynomials for given free quadrilaterals is available in an online appendix: \newline \url{https://github.com/NCAlgebra/UserNCNotebooks/tree/master/Evert/FreeQuadrilaterals}.
	
\subsubsection{Arveson Boundaries of other free spectrahedra}

Aside from free quadrilaterals, a small number of free spectrahedra are known to have Arveson boundary given by a noncommutative variety; however, these examples are comparatively quite simple. A first example is the free cube in $g$ variables, i.e. the set $\cC^g$ defined by
\[
\cC^g = \{X \in SM (\R)^g | \ X_i^2 \preceq I \mathrm{\ for \ } i=1,\dots,g\}.
\]
As shown by \cite[Proposition 7.1]{EHKM18}, the free cube has Arveson boundary
\[
\arv \cC^g = \{X \in SM (\R)^g | \ X_i^2  - I =0 \mathrm{\ for \ } i=1,\dots,g\},
\] 
hence $\arv \cC^g$ is a noncommutative variety.

Other examples include free simplices and the wild disc. Say a bounded free spectrahedron $\cD_A$ is a free simplex in $g$ variables if there exists a collection $\{a^j\}_{j=1}^{g+1} \subset \R^g$ such that 
\[
\cD_A = \cD_{\oplus_{j=1}^{g+1} a^j}.
\]
In words, a free simplex in $g$ variables is a free spectrahedron whose minimal defining tuple is a $g$ tuple of commuting $g+1 \times g+1$ matrices. For $1 \leq i < j \leq g$, let $c_{ij} (x) = x_i x_j - x_j x_i$ be the commutant of $x_i$ with $x_j$, and for $i=1,\dots,g+1$, define\footnote{The $L_{a^j} (x)$ may appear in this product in any order without impacting the resulting noncommutative variety since $c_{ij} (X)=0$ for all  $1 \leq i < j \leq g$ enforces that $X$ is a tuple of commuting matrices.} $p_i (x) = \Pi_{j \neq i} L_{a^j} (x)$. Using \cite[Theorem 6.5]{EHKM18} one may show that $X \in \smrg$ is an Arveson extreme point of $\cD_{\oplus_{j=1}^{g+1} a^j}$ if and only if 
\[
c_{ij} (X) = 0 \mathrm{\ for \ all\ } 1\leq i<j \leq g \mathand p_i (X)=0  \mathrm{\ for \ all\ }  i=1,\dots,g.
\]

Similarly, \cite[Proposition 7.5]{EHKM18} has the consequence that the Arveson boundary of the wild disc is a noncommutative variety. The wild disc is the free spectrahedron with defining pencil
\[
L_A (x_1,x_2) = \begin{pmatrix}
1 + x_1 & x_2 \\
x_2 & 1-x_1
\end{pmatrix}.
\]
A tuple $X = (X_1,X_2) \in SM (\R)^2$ is an Arveson extreme point of the wild disc if and only if $I-X_1^2-X_2^2 = 0$ and $X_1 X_2- X_2 X_1=0$. 

To the author's knowledge, there is no free spectrahedron whose Arveson boundary is known to not be a noncommutative variety; however the author is also unaware of other notable examples of free spectrahedra whose Arveson boundary is known to be a noncommutative variety. Limited numerical evidence suggests that free spectrahedra having Arveson boundary equal to a noncommutative variety are rare.

\subsection{Reader's guide}

Section \ref{sec:HomSpecBasics} introduces homogeneous free spectrahedra and establishes several basic results for them. The main result of the section is Theorem \ref{thm:HomogenousArvesonKernelCondition} which extends the kernel condition given in Theorem \ref{thm:ArvImpliesEuc} \eqref{it:KerCondition} for a tuple to be an Arveson extreme point to the setting of homogeneous free spectrahedra. Section \ref{sec:LinearAndProjectiveTransformations} introduces and examines projective and linear transformations for free and homogeneous free spectrahedra. The main result in this section is Theorem \ref{thm:ArvMappings} which shows that invertible projective mappings of bounded free spectrahedra map Arveson boundary onto Arveson boundary. Finally in Section \ref{sec:QuadrilateralArvesonBoundary} we prove our main result, Theorem \ref{thm:ArvIsVariety}. In addition we show that if $\cD_A$ is a free quadrilateral, then $\cD_A$ is the matrix convex hull of its Arveson extreme points at level $2$. 

The paper includes an appendix where we prove that given any two free quadrilaterals $\cD_A$ and $\cD_B$, there exists an invertible projective mapping which maps $\cD_A$ onto $\cD_B$.  This result is well known in the setting of classical quadrilaterals but requires an adaptation to the free setting.

\ssec{Acknowledgements}

The author thanks J. William Helton for helpful discussions related to this topic and for comments on the original version of the manuscript.

\section{Homogeneous Free Spectrahedra}
\label{sec:HomSpecBasics}

Homogeneous free spectrahedra play an important role in this article. Given a $g+1$ tuple $(A_0,A_1,\dots,A_g)=(A_0,A)$ of $d \times d$ symmetric matrices, we define a \df{homogeneous free spectrahedron at level $n$}, denoted $\cH_{(A_0,A)} (n)$, by
\[
\cH_{(A_0,A)} (n)=\{(X_0,X_1,\dots,X_g) \in \smnrgp |\ \Lambda_{(A_0,A)} (X_0,X) \succeq 0 \}. \index{$\cD_A (n)$}
\]
The corresponding \df{homogeneous free spectrahedron} $\cH_{(A_0,A)}$ is the set $\cup_n \cH_{(A_0,A)} (n) \subset \smrg$. In notation,
\[
\cH_{(A_0,A)}=\{(X_0,X) \in \smrgp |\ \Lambda_{(A_0,A)} (X_0,X) \succeq 0 \} \index{$\cD_A$}.
\]

We will often make us of the fact that one may conjugate a tuple $(A_0,A)$ with an invertible matrix without changing the resulting homogeneous free spectrahedron. 
\begin{lem}
\label{lem:homspectuplequiv}
Let $(A_0,A) \in \smdrgp$ and let $V \in M_d (\R)$ be any invertible matrix. Then 
\[
\cH_{(A_0,A)} = \cH_{V^T (A_0,A) V}. 
\]
\end{lem}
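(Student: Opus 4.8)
The plan is to show that for each $n$ and each $(X_0,X)\in\smnrgp$, the pencil $\Lambda_{V^T(A_0,A)V}(X_0,X)$ is obtained from $\Lambda_{(A_0,A)}(X_0,X)$ by a congruence with an invertible matrix, so that the two are positive semidefinite simultaneously. This gives $\cH_{(A_0,A)}(n)=\cH_{V^T(A_0,A)V}(n)$ for every $n$, hence the claimed equality of the dimension-free sets.

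First I would unwind the definitions. Since $V^T(A_0,A)V=(V^TA_0V,V^TA_1V,\dots,V^TA_gV)$, we have
\[
\Lambda_{V^T(A_0,A)V}(X_0,X)=\sum_{i=0}^{g}(V^TA_iV)\otimes X_i,
\]
with the convention $x_0\leftrightarrow X_0$ included in the sum. The key computational step is the mixed-product property of the Kronecker product: for each $i$,
\[
(V^TA_iV)\otimes X_i=(V^T\otimes I_n)(A_i\otimes X_i)(V\otimes I_n).
\]
Summing over $i$ and factoring out the outer matrices gives
\[
\Lambda_{V^T(A_0,A)V}(X_0,X)=(V^T\otimes I_n)\,\Lambda_{(A_0,A)}(X_0,X)\,(V\otimes I_n).
\]
Writing $W:=V\otimes I_n$, one has $W^T=V^T\otimes I_n$ and $W$ is invertible with inverse $V^{-1}\otimes I_n$ (again by the mixed-product property), so the displayed identity reads $\Lambda_{V^T(A_0,A)V}(X_0,X)=W^T\,\Lambda_{(A_0,A)}(X_0,X)\,W$ with $W$ invertible.

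Finally I would invoke the elementary fact that congruence by an invertible matrix preserves positive semidefiniteness: for a symmetric matrix $M$ and invertible $W$ of compatible size, $M\succeq 0$ if and only if $W^TMW\succeq 0$. Applying this with $M=\Lambda_{(A_0,A)}(X_0,X)$ shows $\Lambda_{(A_0,A)}(X_0,X)\succeq 0 \iff \Lambda_{V^T(A_0,A)V}(X_0,X)\succeq 0$, which is precisely the statement that $(X_0,X)\in\cH_{(A_0,A)}$ if and only if $(X_0,X)\in\cH_{V^T(A_0,A)V}$. I do not anticipate a genuine obstacle here; the only point requiring a little care is bookkeeping the order of the factors in the Kronecker products (which tensor leg $V$ acts on), and confirming that $V$ invertible indeed forces $V\otimes I_n$ invertible so that the transformation is a congruence and not merely a one-sided map.
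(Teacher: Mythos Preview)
Your proof is correct and follows exactly the same approach as the paper: both establish the congruence identity $\Lambda_{V^T(A_0,A)V}(X_0,X)=(V^T\otimes I_n)\,\Lambda_{(A_0,A)}(X_0,X)\,(V\otimes I_n)$ and conclude that positive semidefiniteness is preserved because $V\otimes I_n$ is invertible. You simply spell out the mixed-product property and the invertibility of $V\otimes I_n$ more explicitly than the paper does.
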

\begin{proof}
For any integer $n$ and any tuple $(X_0,X) \in \smnrgp$ and any invertible $V \in M_d (\R)$ one has
\[
\Lambda_{V^T (A_0,A) V} (X_0,X) = (V^T \otimes I_n) \big(\Lambda_{(A_0,A) } (X_0,X)\big) (V \otimes I_n),
\]
hence $\Lambda_{V^T (A_0,A) V} (X_0,X) \succeq 0$ if and only if $\Lambda_{(A_0,A) } (X_0,X) \succeq 0$. 
\end{proof}

The homogeneous free spectrahedra we consider typically arise as the homogenization of a free spectrahedron, hence why the $A_0$ and $X_0$ terms in the $g+1$ tuples above are treated specially. In order to well define such a homogenization, we must first introduce the notion of minimal defining tuples for free and homogeneous free spectrahedra.

\sssec{Minimal defining tuples}
\label{sec:MinDefining}

Given a $g+1$-tuple $(A_0,A) \in \smdrgp$, if there is an orthogonal matrix $U \in M_d (\R)$ such that $U^T (A_0,A) U = (A_0^1,A^1) \oplus (A_0^2,A^2)$, then the linear pencils $\Lambda_{(A_0,A)} (x)$ and $ \big(\Lambda_{(A_0^1,A^1)} \oplus \Lambda_{(A_0^2,A^2)}\big) (x)$ define the same homogeneous free spectrahedron. For $i=1,2$, the linear pencil $\Lambda_{(A_0^i,A^i)}$ is a \df{subpencil} of $\Lambda_{(A_0,A)}$, and if 
$
\cH_{(A_0,A)} = \cH_{(A_0^i,A_0^i)},
$
then $\Lambda_{(A_0^i,A^i)}$ is a \df{defining subpencil} for $\cH_{(A_0,A)}$. Say the tuple $(A_0,A)$ is a \df{minimal defining tuple} for $\cH_{(A_0,A)}$ if for all invertible $V \in M_d(\R)^g$, there no proper subpencil of $\Lambda_{V^T(A_0,A)V}$ which is a defining subpencil for $\cH_{(A_0,A)}$.

For nonhomogeneous free spectrahedra, defining subpencils are defined analogously. A tuple $A \in \smdrg$ is a \df{minimal defining tuple} for a free spectrahedron $\cD_A$ is there is no proper subpencil of $L_A$ which is a defining pencil for $\cD_A$.  Also see \cite{HKM13,Zal17} for details on minimal defining tuples.

\sssec{Homogenizations of free spectrahedra}

Given a free spectrahedron $\cD_A$, the \df{homogenization} of $\cD_A$, denoted $\hom (\cD_A)$, is the homogeneous free spectrahedron $\cH_{(I,\check{A})}$, where $\check{A}$ is any minimal defining tuple for $\cD_A$. The fact that a homogenization is well defined is a consequence of \cite[Theorem 3.12]{HKM13}. Without requiring the defining tuple to be minimal, a homogenization is not necessarily well defined, e.g. see the upcoming Remark \ref{rem:BadHomogenization}.

Similarly, we often consider the ``nonhomogeneous" part of homogeneous free spectrahedra. Given a homogeneous free spectrahedron $\cH_{(A_0,A)})$, let $\hom^{-1} (\cH_{(A_0,A)})$ denote the set
\[
\hom^{-1} (\cH_{(A_0,A)}) = \{X \in \smrg | \ (I,X) \in \cH_{(A_0,A)} \}.
\]
We say that a homogeneous free spectrahedron $\cH_{(A_0,A)}$ is \df{bounded} (i.e. has bounded level sets) if $\hom^{-1} (\cH_{(A_0,A)})$ is bounded. In the case that $A_0$ is positive definite, it is easy to verify that $\hom^{-1} (\cH_{(A_0,A)})$ is the free spectrahedron defined by
\[
\hom^{-1} (\cH_{(A_0,A)}) = \cD_{ A_0^{-1/2} A A_0^{-1/2}}.
\]
Here $A_0^{-1/2}$ is the inverse of the positive definite square root of $A_0$. 

All homogeneous free spectrahedra which arise as the homogenization of some classical free spectrahedron contain the tuple $(1,0, \dots, 0) \in \R^g$ in their interior. We call a homogeneous free spectrahedron $\cH_{(A_0,A)}$  which contains $(1,0,\dots,0)$ in the interior of $\cH_{(A_0,A)} (1)$ a \df{positive homogeneous free spectrahedra}. 

\ssec{Basic properties of homogeneous free spectrahedra}

The following lemma gives several useful properties for homogeneous free spectrahedra. 

\begin{lem}
\label{lem:homspecbasic}
Let $(A_0,A) \in \smdrgp$ and assume $\cH_{(A_0,A)}$ is a positive homogeneous free spectrahedron. Then one has the following
\begin{enumerate}
\item
\label{it:poshomspecA0pos}
The homogeneous component $A_0$ is positive semidefinite. Furthermore, if $(A_0,A)$ is a minimal defining tuple for $\cH_{(A_0,A)}$, then $A_0 \succ 0$.

\item
\label{it:homspecX0pos} If $\cH_{(A_0,A)}$ is bounded, then for any $(X_0,X) \in \cH_{(A_0,A)}$ one has
\[
X_0 \succeq 0 \qquad \mathrm{and} \qquad X_0^{\dagger/2} X X_0^{\dagger/2} \in \hom^{-1} (\cH_{(A_0,A)})
\]
and
\[
\ker X_0 \subset \ker X_i \qquad  \mathrm{for \ all\ } i=1,\dots,g.
\]
Here $X_0^{\dagger/2}$ is the positive semidefinite square root of the Moore-Penrose pseudo inverse of $X_0$.

\item
\label{it:minimalhom}
If $\cH_{(A_0,A)}$ is bounded, then tuple $(A_0,A)$ is a minimal defining tuple for $\cH_{(A_0,A)}$ if and only if $A_0 \succ 0$ and $A_0^{-1/2}A A_0^{-1/2}$ is a minimal defining tuple for $\hom^{-1} (\cH_{(A_0,A)})$.

\item
\label{it:PosSpecIsHomogenization}
A bounded homogeneous free spectrahedron is positive if and only if it can be expressed as the homogenization of a bounded free spectrahedron.

\end{enumerate}
\end{lem}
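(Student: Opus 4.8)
The plan is to prove the four items in the stated order, each using the previous ones, and all resting on three elementary closure properties of a homogeneous free spectrahedron $\cH := \cH_{(A_0,A)}$ which I would record at the outset: every level $\cH(n)$ is a closed convex cone in $\smnrgp$ (in particular closed under addition of $n$-tuples, by linearity of $\Lambda_{(A_0,A)}$), $\cH$ is closed under direct sums, and $\cH$ is closed under arbitrary congruence $Z \mapsto V^T Z V$, since $\Lambda_{(A_0,A)}(V^T Z V) = (I_d \otimes V)^T \Lambda_{(A_0,A)}(Z)(I_d \otimes V)$. For item (1), positivity of $\cH$ gives $A_0 = \Lambda_{(A_0,A)}(1,0,\dots,0) \succeq 0$ at once, so only the ``furthermore'' needs work, and there I would argue the contrapositive. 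Assuming $A_0 \not\succ 0$, set $N = \ker A_0 \neq \{0\}$; because $(1,0,\dots,0)$ is interior to $\cH(1)$, perturbing it in each $\pm e_i$ direction keeps $\Lambda_{(A_0,A)}$ positive semidefinite and forces $v^T A_i v = 0$ for all $v \in N$ and all $i$. Hence, in an orthonormal basis adapted to $N \oplus N^\perp$, $A_0 = 0 \oplus \widehat A_0$ with $\widehat A_0 \succ 0$ and each $A_i$ has vanishing $(N,N)$-block; positivity near $(1,0,\dots,0)$ then also kills the $(N,N^\perp)$-blocks of the $A_i$ (the off-diagonal block of a positive semidefinite block matrix with a zero diagonal block vanishes). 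So an orthogonal conjugation splits $(A_0,A)$ as $(0,\dots,0) \oplus (\widehat A_0, \widehat A)$, making $\Lambda_{(\widehat A_0, \widehat A)}$ a proper defining subpencil and contradicting minimality.

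For item (2), assume $\cH$ bounded and $(X_0,X) \in \cH(n)$. First I would prove $X_0 \succeq 0$: if $u^T X_0 u < 0$ for a unit vector $u$, then $(y_0,y) := u^T(X_0,X)u \in \cH(1)$ has $y_0 < 0$; the segment from $(1,0,\dots,0)$ to $(y_0,y)$ meets the hyperplane $\{x_0 = 0\}$ at a point $(0,r')$ of the closed convex cone $\cH(1)$, so $(1,tr') = (1,0,\dots,0) + t(0,r') \in \cH(1)$ for all $t \geq 0$, and boundedness of $\hom^{-1}(\cH)$ forces $r' = 0$; but then $(y_0,y)$ is a negative multiple of $(1,0,\dots,0)$, so $(-1,0,\dots,0) \in \cH(1)$, i.e.\ $A_0 \preceq 0$, hence $A_0 = 0$ by (1), hence (the interior-point argument once more) all $A_i = 0$ and $\cH = \smrgp$, contradicting boundedness. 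Granting $X_0 \succeq 0$: since $A_0 \succeq 0$, for $\epsilon > 0$ the tuple $(X_0 + \epsilon I_n, X) = (X_0,X) + (\epsilon I_n, 0,\dots,0)$ lies in $\cH(n)$, so conjugating by $(X_0 + \epsilon I_n)^{-1/2}$ shows $Y^{(\epsilon)} := (X_0 + \epsilon I_n)^{-1/2} X (X_0 + \epsilon I_n)^{-1/2} \in \hom^{-1}(\cH)$; boundedness gives a constant $C$ with $\|Y^{(\epsilon)}_i u\|^2 \leq C$, whereas for $u \in \ker X_0$ a short computation gives $\|Y^{(\epsilon)}_i u\|^2 = \epsilon^{-1}(X_i u)^T (X_0 + \epsilon I_n)^{-1}(X_i u)$, and since $(X_0 + \epsilon I_n)^{-1}$ is bounded below by a positive constant for $\epsilon \in (0,1]$, letting $\epsilon \to 0^+$ forces $X_i u = 0$, i.e.\ $\ker X_0 \subseteq \ker X_i$. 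Finally, in a basis adapted to $(\ker X_0)^\perp \oplus \ker X_0$ all the $X_i$ and $X_0$ become block diagonal with zero lower-right block and $X_0 = \widehat X_0 \oplus 0$ with $\widehat X_0 \succ 0$; compressing $(X_0,X)$ to $(\ker X_0)^\perp$ and conjugating by $\widehat X_0^{-1/2}$ puts $(I, \widehat X_0^{-1/2}\widehat X \widehat X_0^{-1/2})$ in $\cH$, and padding with zero rows and columns (legitimate because $(1,0,\dots,0) \in \cH(1)$ and $\cH$ is closed under direct sums) recovers $X_0^{\dagger/2} X X_0^{\dagger/2} \in \hom^{-1}(\cH)$.

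For items (3) and (4) the plan is to exploit the dictionary $\hom^{-1}(\cH_{(A_0,A)}) = \cD_{A_0^{-1/2}AA_0^{-1/2}}$ (valid when $A_0 \succ 0$) and $\hom(\cD_B) = \cH_{(I,\check B)}$ for $\check B$ a minimal defining tuple of $\cD_B$, together with Lemma~\ref{lem:homspectuplequiv} and part (1). For (3): minimality of a defining tuple is unchanged by the conjugations of Lemma~\ref{lem:homspectuplequiv}, so by (1) one may assume $A_0 = I_d$, whence $\hom^{-1}(\cH_{(I,A)}) = \cD_A$; a proper defining subpencil of a conjugate of $(I,A)$ whose $A_0$-component is positive definite normalizes (via the $A_0^{-1/2}$ conjugation) to a proper defining subpencil of a conjugate of $A$ for $\cD_A$, and conversely, and since by (1) any minimal defining tuple of $\cH$ has positive definite $A_0$-component, minimal defining tuples on the two sides match up. For (4): the direction ($\Leftarrow$) is immediate, as $\Lambda_{(I,\check B)}(1,0,\dots,0) = I \succ 0$ is an open condition so $(1,0,\dots,0)$ is interior, and $\hom^{-1}(\hom(\cD_B)) = \cD_B$ is bounded; for ($\Rightarrow$) I would use (1) to pick a minimal defining tuple $(C_0,C)$ of $\cH$ with $C_0 \succ 0$, rewrite $\cH = \cH_{(I,\check C)}$ with $\check C = C_0^{-1/2}CC_0^{-1/2}$ by Lemma~\ref{lem:homspectuplequiv}, note $\check C$ is minimal for $\cD_{\check C} = \hom^{-1}(\cH)$ by (3), and conclude $\hom(\hom^{-1}(\cH)) = \cH_{(I,\check C)} = \cH$, realizing $\cH$ as the homogenization of the bounded free spectrahedron $\hom^{-1}(\cH)$.

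The routine parts are the closure properties and the block-matrix manipulations. The step I expect to be the main obstacle is the bookkeeping in (3), where one has to transport ``defining subpencil of a conjugate'' between the homogeneous side and the affine side; for this I would lean on the standard structure theory of minimal defining tuples (\cite{HKM13,Zal17}) — in particular that a defining tuple is minimal exactly when it has the smallest possible size, and that every defining tuple contains a minimal one as an orthogonal direct summand after an invertible conjugation — so that size comparisons on one side of the dictionary translate into the other.
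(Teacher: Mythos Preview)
Your proposal is correct. Items (1), (3), and (4) follow the paper's proof closely: (1) by evaluating at $(1,0,\dots,0)$ and splitting off $\ker A_0$; (3) by transporting (non-)minimality through $A \mapsto A_0^{-1/2} A A_0^{-1/2}$ and invoking \cite{HKM13} for the size characterization, the key step being that $\cD_B = \cD_A$ forces $\cH_{(I,B)} = \cH_{(I,A)}$ under boundedness --- this is exactly where item~(2) is used, and you should make that step explicit, since Remark~\ref{rem:BadHomogenization} shows it fails without boundedness; and (4) as a corollary of (3). The genuine difference is item (2). The paper first reduces to $A_0 = I$ via Lemma~\ref{lem:homspectuplequiv} and argues by compression to a unit vector $v$: if $v^T X_0 v < 0$ then $\Lambda_A(v^T X v) \succeq |v^T X_0 v|\,I$, so $\alpha\, v^T X v \in \cD_A$ for all $\alpha \geq 0$, contradicting boundedness; for $\ker X_0 \subseteq \ker X_i$ it says to repeat this with an isometry $V$ onto $\ker X_0$, which literally yields only $V^T X V = 0$ and then tacitly uses linear independence of the $A_i$ (a consequence of boundedness) to upgrade to $X_i V = 0$. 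Your route avoids the reduction to $A_0 = I$, using instead the cone structure of $\cH(1)$ for $X_0 \succeq 0$ and the $\epsilon$-regularization $(X_0 + \epsilon I, X) \in \cH$ for the kernel inclusion. The paper's compression gives $X_0 \succeq 0$ more quickly than your case analysis, while your $\epsilon$-argument for the kernel is self-contained and sidesteps the implicit linear-independence step.
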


\begin{proof}
To prove item \eqref{it:poshomspecA0pos} note that by assumption we have $(1,0,\cdots 0) \in \cH_{(A_0,A)}$ hence
\[
\Lambda_{(A_0,A)} (1,0,\dots, 0) = A_0 \succeq 0. 
\]
It remains to show that $A_0 \succ 0$ if $(A_0,A)$ is a minimal defining tuple for $\cH_{(A_0,A)}$. 

To this end let $V$ be an isometry mapping from the kernel of $A_0$ into $\R^d$. By assumption we have $(1,0,\dots, 0)$ in the interior of $\cH_{(A_0,A)}$, so there exists some $\epsilon>0$ such that for all $X \in \R^g$ with $\|X\|_2 < \epsilon$ we have $(1,X) \in \cH_{(A_0,A)}$. Thus we have
\[
V^T \Lambda_{(A_0,A)} (1,X) V = \Lambda_{V^T A V} (X) \succeq 0 \qquad \mathrm{for \ all \ } X \mathrm{\ with \ } \|X\|_2 < \epsilon
\]
from which it follows that $\Lambda_{V^T A V}$ is identically zero hence $V^T A V=0.$ This implies that there is a unitary $U \in \R^d$ such that
\[
U^T (A_0,A) U= (\check{A_0},\check{A}) \oplus (0,0)
\]
where $\check{A_0} \succ 0$. We conclude that if $(A_0,A)$ is a minimal defining tuple then $A_0 \succ 0$. 

We now prove item \eqref{it:homspecX0pos}. By assumption $\cH_{(A_0,A)}$ is positive so using Lemma \ref{lem:homspectuplequiv} with item \eqref{it:poshomspecA0pos} we can reduce to the case $A_0=I$ so that 
\[
\hom^{-1} (\cH{(A_0,A)}) = \cD_A.
\]
Assume towards a contradiction that there is some $(X_0,X) \in \cH_{(I,A)}(n)$ with $X_0 \not \succeq 0$. Then there is an isometry $V: \R \to \R^n$ such that $V^T X_0 V < 0$. But then we have
\[
(I \otimes V^T) \Lambda_{(I,A)} (X_0 , X ) (I \otimes V) = \Lambda_{(I,A)} (V^T X_0 V, V^T X V) = I \otimes (V^T X_0 V) + \Lambda_A (V^T X V) \succeq 0.
\]
If follows that 
\[
\Lambda_A (\alpha V^T X V) \succ 0  \qquad \mathrm{ for \ all  } \qquad \alpha>0,
\]
from which we conclude $\alpha V^T X V \in \cD_A$ for all $\alpha>0$, contradicting the assumption that $\cD_A = \hom^{-1}  (\cH{(I,A)})$ is bounded. Having shown that $X_0$ is positive semidefinite, it is easy to verify that 
\[
 X_0^{\dagger/2} X X_0^{\dagger/2} \in \hom^{-1} (\cH{(A_0,A)}).
\]

The claim 
\[
\ker X_0 \subset \ker X_i \qquad  \mathrm{for \ all\ } i=1,\dots,g.
\]
follows from letting $V$ be an isometry mapping the kernel of $X_0$ into $\R^n$ and repeating the argument above. 

For item \eqref{it:minimalhom}, first assume that $(A_0,A)$ is a minimal defining tuple for $\cH_{(A_0,A)}$. Item \eqref{it:poshomspecA0pos} shows that $A_0$ is positive definite so we need only show that $A_0^{-1/2} A A_0^{-1/2}$ is a minimal defining tuple for $\hom^{-1} (\cH_{(A_0,A)})=\cD_{A_0^{-1/2} A A_0^{-1/2}}.$ Assume towards a contradiction that there is some proper subpencil $A^1$ of $A_0^{-1/2} A A_0^{-1/2}$ such that $\cD_{A_0^{-1/2} A A_0^{-1/2}}=\cD_{A^1}$. Equivalently we have
\beq
\label{eq:equivpencils}
L_{A_0^{-1/2} A A_0^{-1/2}} (X) \succeq 0 \qquad \mathrm{if\ and \ only \ if} \qquad L_{A^1} (X) \succeq 0. 
\eeq
We will show 
\[
\cH_{(I,A_0^{-1/2} A A_0^{-1/2})} = \cH_{(I,A^1)} 
\]

Note that $\cH_{(I,A_0^{-1/2} A A_0^{-1/2})} = \cH_{(A_0,A)}$ is bounded by assumption. Additionally, $\cH_{(I,A^1)}$ is bounded since $\hom^{-1} (\cH_{(I,A^1)}) = \cD_{A^1}$ is bounded. Using item \eqref{it:homspecX0pos} shows that if $(Y_0,Y)$ is an element of either $ \cH_{(A_0,A)}$ or $\cH_{(I,A^1)}$, then up to unitary equivalence we have
\[
(Y_0,Y)=  (\check{Y}_0,\check{Y}) \oplus (0,0)
\]
where $\check{Y}_0 \succ 0$. Furthermore, for $\check{Y}_0 \succ 0$, one has
\[
\Lambda_{(I,A_0^{-1/2} A A_0^{-1/2})} (\check{Y}_0,\check{Y}) \succeq 0 \qquad \mathrm{if\ and \ only \ if} \qquad L_{(A_0^{-1/2} A A_0^{-1/2})} (\check{Y}_0^{-1/2} \check{Y} \check{Y}_0^{-1/2}) \succ 0,
\]
and similarly
\[
\Lambda_{(I,A^1)} (\check{Y}_0,\check{Y}) \succeq 0 \qquad \mathrm{if\ and \ only \ if} \qquad L_{A^1} (\check{Y}_0^{-1/2} \check{Y} \check{Y}_0^{-1/2}) \succ 0.
\]
Combining the above with equation \eqref{eq:equivpencils} shows $\cH_{(A_0,A)}=\cH_{(I,A_0^{-1/2} A A_0^{-1/2})} = \cH_{(I,A^1)}$, hence $(A_0,A)$ is not a minimal defining tuple for $\cH_{(A_0,A)}$. 

It remains to prove the reverse direction. We now assume that $A_0 \succ 0$ and show that if $(A_0,A)$ is not a minimal defining tuple for $\cH_{(A_0,A)}$, then $A_0^{-1/2} A A_0^{-1/2}$ is not a minimal defining tuple for $\cD_{A_0^{-1/2} A A_0^{-1/2}} = \hom^{-1} (\cH_{(A_0,A)})$. By definition, there exists some invertible matrix $V \in M_d (\R)^g$ such that 
\[
V^T (A_0, A)V = (A_0^1,A^1) \oplus (A_0^2,A^2)
\]
and such that $\cH_{(A_0,A)} = \cH_{(A_0^1,A^1)}$. Since $A_0$ is positive definite and $V$ is invertible, we have $A_0^1 \succ 0$, hence
\[
\cD_{A_0^{-1/2} A A_0^{-1/2}} = \hom^{-1} (\cH_{(A_0,A)}) =  \hom^{-1}  (\cH_{(A_0^1,A^1)}) = \cD_{(A_0^1)^{-1/2} A^1 (A_0^1)^{-1/2}}.
\]
The size of the matrices in the tuple $(A_0^1)^{-1/2} A^1 (A_0^1)^{-1/2}$ is strictly smaller than that of the matrices in the tuple $A_0^{-1/2} A A_0^{-1/2}$ so an application \cite[Theorem 3.12]{HKM13} shows that $A_0^{-1/2} A A_0^{-1/2}$ is not a minimal defining tuple for $\cD_{A_0^{-1/2} A A_0^{-1/2}}$.

Item \eqref{it:PosSpecIsHomogenization} is a straightforward consequence of item \eqref{it:minimalhom}.
\end{proof}

\begin{remark}
\label{rem:BadHomogenization}
The assumption that $\cH_{(A_0,A)}$ is bounded in item \eqref{it:minimalhom} of Lemma \ref{lem:homspecbasic} is necessary. In the case of unbounded free spectrahedra, it is possible to have tuples which define the same free spectrahedron, but not the same homogeneous free spectrahedron. An example in a single variable is as follows. Set
\[
A = 1 \qquad \mathrm{and} \qquad
B = \begin{pmatrix} 1 & 0 \\
0 & 1/2
\end{pmatrix}.
\]
Then $\cD_A = \cD_B$ while $\cH_{(I,A)} \neq \cH_{(I,B)}$ as seen from the fact that $(-1,1) \in \cH_{(I,A)}$ but $(-1,1) \notin \cH_{(I,B)}$. Furthermore, $(I,B)$ is a minimal defining tuple for $\cH_{(I,B)}$ while $B$ is not a minimal defining tuple for $\cD_B$. 
\end{remark}

\begin{remark}

\cite[Theorem 3.12]{HKM13} (see also \cite{Zal17}) shows that any two minimal defining tuples for a free spectrahedron $\cD_A$ are unitarily equivalent, hence one may alternatively define a minimal defining tuple for $\cD_A$ as a defining tuple for $\cD_A$ which has smallest size. Using Lemma \ref{lem:homspecbasic} \eqref{it:minimalhom}, one may also define a minimal defining tuple for a bounded positive homogeneous free spectrahedron in this manner. That is, $(A_0,A) \in \smdrgp$ is a minimal defining tuple for $\cH_{(A_0,A)}$ if and only if for any integer $d_1$ and any tuple $(B_0,B) \in SM_{d_1} (\R)^{g+1}$ such that $\cH_{(A_0,A)}=\cH_{(B_0,B)}$ one has $d \leq d_1$. 

\end{remark}

\subsection{The Arveson boundary of homogeneous free spectrahedra}

Lemma \ref{lem:homspecbasic} allows the Arveson boundary of a bounded positive homogeneous free spectrahedron $\cH_{(A_0,A)}$, denoted $\arv(\cH_{(A_0,A)}),$ to be defined in the following manor. For a $(g+1)$-tuple $(X_0,X)\in \smnrgp$ say $(X_0,X)$ is in the \df{Arveson boundary} of bounded positive homogeneous free spectrahedron $\cH_{(A_0,A)}$ if and only if $(X_0,X)$ is unitarily equivalent to a tuple of the form $(Y_0,Y) \oplus (0,0)$ where 
\[
Y_0 \succ 0 \qquad \mathrm{and} \qquad Y_0^{-1/2} Y Y_0^{-1/2} \in \arv(\hom^{-1} (\cH_{(A_0,A)})).
\]
The following theorem is an extension of  Theorem \ref{thm:ArvImpliesEuc} \eqref{it:KerCondition} to the setting of homogeneous free spectrahedra. 

\begin{thm}
\label{thm:HomogenousArvesonKernelCondition}
Let $\cH_{(A_0,A)}$ be a bounded positive homogeneous free spectrahedron with minimal defining tuple $(A_0,A)$. Let $(X_0,X) \in \smnrgp$ with $X_0 \succ 0$ and set $Y=X_0^{-1/2}X X_0^{-1/2}$. Then $(X_0,X)\in \arv \cH_{(A_0,A)}$ if and only if for each non negative integer $m \leq n$ the only $\beta \in M_n (\R)^g$ satisfying
\beq
\label{eq:ArvesonKernel}
\ker \Lambda_{(A_0,A)} (I,Y) \subseteq \ker \Lambda_{(A_0,A)} (I_m \oplus 0_{n-m},\beta)
\eeq
is  \beq
\label{eq:betaEquals}
\beta=
\begin{pmatrix}
Y_{11} & Y_{12} \\
0 & 0_{n-m}
\end{pmatrix} 
\eeq where $Y$ and $\beta$ are written with respect to the block decomposition of $I_m \oplus 0_{n-m}$.
\end{thm}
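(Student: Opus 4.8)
The plan is to strip off the homogeneous variable, reducing everything to the ordinary free spectrahedron $\hom^{-1}(\cH_{(A_0,A)})$, and then to read off the column kernel condition of Theorem~\ref{thm:ArvImpliesEuc}~\eqref{it:KerCondition}. Throughout I assume $(X_0,X)\in\cH_{(A_0,A)}$, as is implicit in the statement. Since $(A_0,A)$ is a minimal defining tuple of the bounded positive homogeneous free spectrahedron $\cH_{(A_0,A)}$, Lemma~\ref{lem:homspecbasic}~\eqref{it:poshomspecA0pos} gives $A_0\succ0$ and hence $\hom^{-1}(\cH_{(A_0,A)})=\cD_{\tilde A}$ with $\tilde A:=A_0^{-1/2}AA_0^{-1/2}$. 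Because $X_0\succ0$, the definition of the Arveson boundary of a homogeneous free spectrahedron says that $(X_0,X)\in\arv\cH_{(A_0,A)}$ if and only if $Y=X_0^{-1/2}XX_0^{-1/2}\in\arv\cD_{\tilde A}$. A Kronecker-product computation gives, for every $\beta\in M_n(\R)^g$ and $0\le m\le n$,
\[
\Lambda_{(A_0,A)}(I_n,Y)=(A_0^{1/2}\otimes I_n)\,L_{\tilde A}(Y)\,(A_0^{1/2}\otimes I_n)
\]
and
\[
\Lambda_{(A_0,A)}(I_m\oplus 0_{n-m},\beta)=(A_0^{1/2}\otimes I_n)\,\Lambda_{(I,\tilde A)}(I_m\oplus 0_{n-m},\beta)\,(A_0^{1/2}\otimes I_n),
\]
so conjugating by the invertible matrix $A_0^{1/2}\otimes I_n$ turns the inclusion~\eqref{eq:ArvesonKernel} into $\ker L_{\tilde A}(Y)\subseteq\ker\Lambda_{(I,\tilde A)}(I_m\oplus 0_{n-m},\beta)$. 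Relabelling $\tilde A$ as $A$ and $Y$ as $X\in\cD_A(n)$, it suffices to show that $X\in\arv\cD_A$ if and only if, for each $0\le m\le n$, the only $\beta\in M_n(\R)^g$ satisfying $\ker L_A(X)\subseteq\ker\Lambda_{(I,A)}(I_m\oplus 0_{n-m},\beta)$ is the tuple $\beta^{\star}_m$ on the right-hand side of~\eqref{eq:betaEquals} (now written in terms of $X$).

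The decisive observation is that $\beta^{\star}_m$ always satisfies that inclusion. Writing $L_A(X)$ in the block decomposition $\R^n=\R^m\oplus\R^{n-m}$, one sees that $\Lambda_{(I,A)}(I_m\oplus 0_{n-m},\beta^{\star}_m)$ is precisely the block-row truncation $\big(I_d\otimes(I_m\oplus 0_{n-m})\big)\,L_A(X)$ of $L_A(X)$, whose kernel contains $\ker L_A(X)$. Hence for an arbitrary $\beta$, setting $\gamma:=\beta-\beta^{\star}_m\in M_n(\R)^g$ and $\Lambda_A(\gamma):=\sum_i A_i\otimes\gamma_i$, the identity $\Lambda_{(I,A)}(I_m\oplus 0_{n-m},\beta)=\Lambda_{(I,A)}(I_m\oplus 0_{n-m},\beta^{\star}_m)+\Lambda_A(\gamma)$ shows that $\ker L_A(X)\subseteq\ker\Lambda_{(I,A)}(I_m\oplus 0_{n-m},\beta)$ is equivalent to $\ker L_A(X)\subseteq\ker\Lambda_A(\gamma)$; and $\beta=\beta^{\star}_m$ is the same as $\gamma=0$. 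Thus the condition in the theorem collapses to the single, $m$-independent statement: if $\gamma\in M_n(\R)^g$ and $\ker L_A(X)\subseteq\ker\Lambda_A(\gamma)$, then $\gamma=0$. (In particular the quantifier over all $m$ is redundant; any one value of $m$ suffices.)

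It remains to prove that this last statement is equivalent to $X\in\arv\cD_A$. Decomposing the $\R^n$ tensor leg of $\R^{dn}$ along the standard basis $e_1,\dots,e_n$, one checks that $\ker\Lambda_A(\gamma)=\bigcap_{j=1}^n\ker\Lambda_A\big((\gamma^{[j]})^T\big)$, where $\gamma^{[j]}\in M_{n\times 1}(\R)^g$ has $i$-th coordinate $\gamma_i^T e_j$, the transpose of the $j$-th row of $\gamma_i$. Hence $\ker L_A(X)\subseteq\ker\Lambda_A(\gamma)$ holds if and only if $\ker L_A(X)\subseteq\ker\Lambda_A\big((\gamma^{[j]})^T\big)$ for every $j$. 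If $X\in\arv\cD_A$, then Theorem~\ref{thm:ArvImpliesEuc}~\eqref{it:KerCondition} forces each $\gamma^{[j]}=0$, that is $\gamma=0$. Conversely, if $X\notin\arv\cD_A$, the same theorem supplies a nonzero $b\in M_{n\times 1}(\R)^g$ with $\ker L_A(X)\subseteq\ker\Lambda_A(b^T)$; taking $\gamma_i:=e_1 b_i^T$ (first row $b_i^T$, remaining rows zero) produces a nonzero $\gamma$ with $\gamma^{[1]}=b$ and $\gamma^{[j]}=0$ for $j>1$, so $\ker L_A(X)\subseteq\ker\Lambda_A(\gamma)$. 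This completes the argument.

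The main obstacle, as I see it, is the bookkeeping in the second paragraph: recognizing $\Lambda_{(I,A)}(I_m\oplus 0_{n-m},\beta^{\star}_m)$ as the block-row truncation $\big(I_d\otimes(I_m\oplus 0_{n-m})\big)L_A(X)$ of $L_A(X)$, which is what makes $\beta^{\star}_m$ a solution ``for free'' and concentrates the content of the theorem in the nonsymmetric perturbation $\gamma$. Once that is in place, the congruence of the first paragraph and the row-by-row reduction of the third are purely mechanical, and no ingredient beyond Theorem~\ref{thm:ArvImpliesEuc}~\eqref{it:KerCondition} is needed.
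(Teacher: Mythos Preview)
Your proof is correct and follows the same overall strategy as the paper: reduce to $A_0=I$ by congruence with $A_0^{1/2}\otimes I_n$, then invoke Theorem~\ref{thm:ArvImpliesEuc}~\eqref{it:KerCondition}. The execution differs in a way worth noting. The paper handles the reverse direction block by block: it first observes $\ker\Lambda_{(I,A)}(I_m\oplus 0,\beta)\subseteq\ker\Lambda_A\bigl(\begin{smallmatrix}0&0\\\beta_{21}&\beta_{22}\end{smallmatrix}\bigr)$ to kill the bottom blocks, then subtracts $\Lambda_{(I,A)}(I_m\oplus 0,\beta)$ from $\Lambda_{(I,A)}(I,Y)$ to force $\beta_{11}=Y_{11}$, $\beta_{12}=Y_{12}$. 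You instead recognise upfront that $\Lambda_{(I,A)}(I_m\oplus 0,\beta^\star_m)=(I_d\otimes(I_m\oplus 0))L_A(X)$, so $\beta^\star_m$ is always a solution, and subtract it off in one stroke to reduce to the single homogeneous condition $\ker L_A(X)\subseteq\ker\Lambda_A(\gamma)\Rightarrow\gamma=0$; this makes the redundancy of the quantifier over $m$ transparent. You also spell out the row-by-row passage from $\gamma\in M_n(\R)^g$ down to columns $\gamma^{[j]}\in M_{n\times 1}(\R)^g$, which the paper uses implicitly when it applies Theorem~\ref{thm:ArvImpliesEuc}~\eqref{it:KerCondition} to a non-column matrix. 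Your organisation is slightly more economical; the paper's is more concrete about the block structure.
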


\begin{proof}
By assumption $(A_0,A)$ is a minimal defining tuple for $\cH_{(A_0,A)}$, so Lemma \ref{lem:homspecbasic} shows that $A_0 \succ 0$. It is straight forward to check that a tuple $(I_m \oplus 0_{n-m},\beta)$ satisfies equation \eqref{eq:ArvesonKernel} if and only if
\[
\ker \Lambda_{(I,A_0^{-1/2} A A_0^{-1/2})} (I,Y) \subseteq \ker \Lambda_{(I,A_0^{-1/2} A A_0^{-1/2})} (I_m \oplus 0_{n-m},\beta)
\]
so we may without loss of generality reduce to the case $A_0=I$. 

Now, to prove the forward direction of the theorem, note that by assumption the only $\beta \in(\R^{n \times n})^g$ satisfying 
\[
\ker \Lambda_{(I,A)} (I,Y)=\subseteq \ker \Lambda_{(I,A)} ( 0_n,\beta)
\]
is $\beta=0$. It follows from Theorem \ref{thm:ArvImpliesEuc} \eqref{it:KerCondition} that $Y \in \arv(\cD_A)$ and therefore $(X_0,X) \in \arv\cH_{(I,A)}$. 

%We first prove that if $(H,X) \in \arv(\cH_A)$ and $(I_m \oplus 0_{n-m},\beta) $ satisfy equation \eqref{eq:ArvesonKernel} then $\beta$ is given by equation \eqref{eq:betaEquals}. 

We now prove the reverse direction. Observe that for any $\beta \in M_n (\R)^g$ one has
\[
\ker \Lambda_{(I,A)} \left(
\begin{pmatrix}
I_m & 0 \\
0 & 0_{n-m}
\end{pmatrix},
\begin{pmatrix}
\beta_{11} & \beta_{12} \\
\beta_{21} & \beta_{22}
\end{pmatrix}
\right)
\subseteq 
\ker \Lambda_{(I,A)} \left(
\begin{pmatrix}
0_m & 0 \\
0 & 0_{n-m}
\end{pmatrix},
\begin{pmatrix}
0_m & 0 \\
\beta_{21} & \beta_{22}
\end{pmatrix}
\right)
\]
It follows that if $\beta$ satisfies equation \eqref{eq:ArvesonKernel}, then 
\[
\ker \Lambda_{(I,A)} (I,Y)=\ker L_A (Y) \subseteq \ker \Lambda_A \left(\begin{pmatrix}
0_m & 0 \\
\beta_{21} & \beta_{22}
\end{pmatrix}\right)
\]
 By definition, $(X_0,X) \in \arv(\cH_A)$ if and only if $Y \in \arv (\cD_A)$ so again using Theorem \ref{thm:ArvImpliesEuc} \eqref{it:KerCondition} shows that that $\beta_{21}=0$ and $\beta_{22}=0$. 
 
To complete the proof observe if $\beta$ satisfies equation \eqref{eq:ArvesonKernel} then we have
\[
 \ker \Lambda_{(I,A)} (I,Y) \subseteq \ker \Lambda_{(I,A)} (I,Y)- \Lambda_{(I,A)} (I_m \oplus 0_{n_m},\beta).
\]
Using $\beta_{21}=0$ and $\beta_{22}=0$ gives
\[
\begin{array}{rcl}
\ker \Lambda_{(I,A)} (I,Y)- \Lambda_{(I,A)} (I_m \oplus 0_{n_m},\beta)&=&\ker \Lambda_{(I,A)} \left(
\begin{pmatrix}
0_m & 0 \\
0 & I_{n-m}
\end{pmatrix},
\begin{pmatrix}
Y_{11}-\beta_{11} & Y_{12}-\beta_{12} \\
Y_{21} & Y_{22}
\end{pmatrix}
\right) \\
&\subseteq &
\ker \Lambda_{(I,A)} \left(
\begin{pmatrix}
0_m & 0 \\
0 & 0_{n-m}
\end{pmatrix},
\begin{pmatrix}
Y_{11}-\beta_{11} & Y_{12}-\beta_{12} \\
0 & 0_{n-m}
\end{pmatrix}
\right)
\end{array}
\]
This shows that 
\[\ker L_A (Y) \subseteq \ker \Lambda_A \left(\begin{pmatrix}
Y_{11}-\beta_{11} & Y_{12}-\beta_{12} \\
0 & 0
\end{pmatrix}\right)
\]
As before, it follows from Theorem \ref{thm:ArvImpliesEuc} \eqref{it:KerCondition} that $Y_{11}=\beta_{11}$ and $Y_{12}=\beta_{12}$. 
\end{proof}

\section{Linear and projective transformations of Homogeneous Free Spectrahedra}
\label{sec:LinearAndProjectiveTransformations}

We now discuss linear and projective transformations of free and homogeneous free spectrahedra. The main goal in this section is to show that the Arveson boundary of a free spectrahedron is preserved under invertible projective transformations of the free spectrahedron.

Given a matrix $W \in M_{g} (\R)$ we define a linear transformation $\cT_W$ on $\smrg$ by
\[
\cT_W X=\Big(\sum_{j=1}^g W_{1j} X_j , \dots,\sum_{j=1}^g W_{gj} X_j \Big) \qquad \mathrm{for \ all\ } X \in \smrg.
\]
Note that if we consider $x=(x_1,\dots, x_g)$ as a vector of noncommuting indeterminants, then $\cT_W X$ is the evaluation of $W x$ on the tuple $X$. A linear transformation of a homogeneous free spectrahedron is the set defined by
\[
\cT_W (\cH_{(A_0,A)}) = \{ \cT_W (X) | \ X \in \cH_{(A_0,A)} \}.
\]

Suppose we are given a free spectrahedron $\cD_A$ and a matrix $W \in M_{g+1} (\R)$ such that $\cT_W (\hom (\cD_A))$ is a positive homogeneous free spectrahedron. Then we call $\cT_W$ a \df{positive linear transformation} of $\hom (\cD_A)$ and we define the \df{projective transformation} $\cP_W$ of $\cD_A$ by 
\[
\cP_W (\cD_A) = \hom^{-1}(\cT_W (\hom (\cD_A))).
\] 

We extend the definition of $\hom$ and $\hom^{-1}$ and $\cP_W$ to matrix tuples as follows. Given a matrix tuple $X \in \smrg$, we define
\[
\hom(X)=(I,X) \in \smrgp.
\]
Additionally, given a matrix tuple $(X_0,X) \in \smrgp$ with $X_0 \succeq 0$, we define 
\[
\hom^{-1} (X) = X_0^{\dagger/2} X X_0^{\dagger/2} \in \smrg.
\]

Suppose as before that the free spectrahedron $\cD_A$ and matrix $W \in M_{g+1}(\R)$ are chosen so that $\cT_W (\hom (\cD_A))$ is a positive homogeneous free spectrahedron. Then for $X \in \cD_A$ we define
\[
\cP_W (X)= \hom^{-1} (\cT_W ( \hom(X))).
\]
Recall that the homogeneous component of $\cT_W ( \hom(X))$ is positive semidefinite as a consequence of Lemma \ref{lem:homspecbasic}, hence $\cP_W(X)$ is well defined. With these definitions it is straight forward to check that
\[
\cP_W (\cD_A) = \{\cP_W (X) | \ X \in \cD_A \}.
\] 
In addition, one may easily verify that a projective or linear transformation is invertible if and only if the matrix $W$ is invertible, with inverses given by $\cP_{W^{-1}}$ and $\cT_{W^{-1}}$, respectively.

\subsection{Basic properties of linear and projective transformations}

We now establish several basic properties for linear and projective mappings of (homogeneous) free spectrahedra. Our first lemma shows that a linear transformation of homogeneous free spectrahedron is again a homogeneous free spectrahedron.
\begin{lem}
\label{lem:HomSpecTransformProps}
For any $A,X \in \smrg$ and $V \in M_{g} (\R)$ one has
\beq
\label{eq:LambdaTransformEq}
\Lambda_A (\cT_V (X)) = \Lambda_{\cT_{V^T} (A)} (X)
\eeq
As consequence, if $W \in M_{g+1} (\R)$ is invertible then one has
\beq
\label{eq:HomSpecTransform}
 \cT_W (\cH_{(I,A)})= \cH_{\cT_{W^{-T}} (I,A)} .
\eeq
\end{lem}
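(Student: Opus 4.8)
The plan is to prove the pointwise pencil identity \eqref{eq:LambdaTransformEq} by a direct index computation, and then obtain the set identity \eqref{eq:HomSpecTransform} as a formal consequence using invertibility of $\cT_W$ (whose inverse is $\cT_{W^{-1}}$, as already noted above).

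For \eqref{eq:LambdaTransformEq}, I would simply expand both sides. The $i$-th coordinate of $\cT_V(X)$ is $\sum_{j=1}^g V_{ij} X_j$, so
\[
\Lambda_A(\cT_V(X)) = \sum_{i=1}^g A_i \otimes \Big(\sum_{j=1}^g V_{ij} X_j\Big) = \sum_{j=1}^g \Big(\sum_{i=1}^g V_{ij} A_i\Big) \otimes X_j .
\]
Since the $j$-th coordinate of $\cT_{V^T}(A)$ is $\sum_{i=1}^g (V^T)_{ji} A_i = \sum_{i=1}^g V_{ij} A_i$, the right-hand side above is exactly $\Lambda_{\cT_{V^T}(A)}(X)$, which is \eqref{eq:LambdaTransformEq}. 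The only point requiring care is that interchanging the order of summation is what transposes $V$ (and leaves $A$ untouched).

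For \eqref{eq:HomSpecTransform}, since $W \in M_{g+1}(\R)$ is invertible, $\cT_W$ is a bijection of $\smrgp$ with inverse $\cT_{W^{-1}}$, so $(Y_0,Y) \in \cT_W(\cH_{(I,A)})$ if and only if $\cT_{W^{-1}}(Y_0,Y) \in \cH_{(I,A)}$, i.e.\ $\Lambda_{(I,A)}(\cT_{W^{-1}}(Y_0,Y)) \succeq 0$. Applying \eqref{eq:LambdaTransformEq} in $g+1$ variables with $V = W^{-1}$ (so $V^T = W^{-T}$) rewrites this condition as $\Lambda_{\cT_{W^{-T}}(I,A)}(Y_0,Y) \succeq 0$, i.e.\ $(Y_0,Y) \in \cH_{\cT_{W^{-T}}(I,A)}$. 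There is no genuine obstacle here; the argument is pure bookkeeping, and the only thing I would double-check is that the conclusion really involves $W^{-T}$ rather than $W^{-1}$ or $W^{T}$, which is forced by the transpose produced in \eqref{eq:LambdaTransformEq}.
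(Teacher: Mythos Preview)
Your proof is correct and follows exactly the approach the paper indicates: the paper simply declares \eqref{eq:LambdaTransformEq} a routine calculation and then combines it with the observation that $\cT_W^{-1}=\cT_{W^{-1}}$, while you carry out that calculation explicitly. Your bookkeeping (in particular the appearance of $W^{-T}$) is correct.
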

\begin{proof}
It is a routine calculation to verify that equation \eqref{eq:LambdaTransformEq} holds for any $A,X \in \smrg$ and any $V \in M_{g} (\R)$ and in the case that $W \in M_n (\R)^g$ is invertible, it is straightforward to check that $\cT_W$ is invertible with inverse $\cT_{W^{-1}}$. Combining these facts gives  $ \cT_W (\cH_{(I,A)})= \cH_{\cT_{W^{-T}} (I,A)} $.
\end{proof}

We next show that linear transformations are free maps in that they respect direct sums and left and right element wise matrix multiplications. An important consequence of this fact is that invertible linear transformations of minimal defining tuples again give minimal defining tuples. 

\begin{lem}
\label{lem:HomMinDefiningTransform}
Linear transformations of matrix tuples respect direct sums and left and right matrix multiplication. That is, for any $A,B \in \smrgp$ and $W \in M_{g+1} (\R)$ one has $\cT_W (A \oplus B)= \cT_W (A) \oplus \cT_W (B)$. If in addition $A$ is a $g$-tuple of $d \times d$ matrices $U \in M_{n \times d} (\R)$ and $V \in M_{d \times m} (\R) $, then 
\[
\cT_W (UAV) = U \cT_W (A) V \in M_{n \times m} (\R)^g.
\]
As a consequence, if $\cT_W$ is invertible and $(A_0,A) \in \smdrgp$ is a minimal defining tuple for $\cH_{(A_0,A)}$, then $\cT_{W^{-T}} (A_0,A)$ is a minimal defining tuple for $\cT_W (\cH_{(A_0,A)})$.
\end{lem}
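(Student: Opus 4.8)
The plan is to prove the three assertions in order, reducing each to a routine verification plus an appeal to the already-established facts. The first claim, that $\cT_W$ respects direct sums, is immediate from the defining formula: each coordinate of $\cT_W(A\oplus B)$ is a fixed linear combination $\sum_j W_{ij}(A_j\oplus B_j)$, and since direct sum commutes with scalar multiplication and with finite sums, this equals $\big(\sum_j W_{ij}A_j\big)\oplus\big(\sum_j W_{ij}B_j\big)$, which is exactly the $i$-th coordinate of $\cT_W(A)\oplus\cT_W(B)$. For the compatibility with left and right multiplication, the same bookkeeping applies: $U(\sum_j W_{ij}A_j)V=\sum_j W_{ij}(UA_jV)$ because matrix multiplication is bilinear, so $\cT_W(UAV)=U\,\cT_W(A)\,V$. (One should note the innocuous typo that $W$ here is $(g{+}1)\times(g{+}1)$ while $A$ is called a ``$g$-tuple''; the intended statement is for $g{+}1$-tuples, matching $\smrgp$, and the argument is identical regardless.)

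For the final consequence about minimal defining tuples, suppose $\cT_W$ is invertible and $(A_0,A)\in\smdrgp$ is a minimal defining tuple for $\cH_{(A_0,A)}$. First, Lemma \ref{lem:HomSpecTransformProps} (equation \eqref{eq:HomSpecTransform}, together with the easy extension of \eqref{eq:LambdaTransformEq} to pencils not necessarily normalized at $I$) gives that $\cT_W(\cH_{(A_0,A)})$ is the homogeneous free spectrahedron defined by the tuple $\cT_{W^{-T}}(A_0,A)$, so the statement makes sense. Now I would argue by contradiction: if $\cT_{W^{-T}}(A_0,A)$ were \emph{not} minimal for $\cT_W(\cH_{(A_0,A)})$, then by definition there is an invertible $V\in M_d(\R)$ and a proper subpencil of $\Lambda_{V^T\cT_{W^{-T}}(A_0,A)V}$ which is a defining subpencil. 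By Lemma \ref{lem:homspectuplequiv} we may absorb $V$, so after a unitary change we have an orthogonal decomposition $\cT_{W^{-T}}(A_0,A)\simu (B_0^1,B^1)\oplus(B_0^2,B^2)$ with $\cH_{(B_0^1,B^1)}=\cT_W(\cH_{(A_0,A)})$ and $(B_0^2,B^2)\ne 0$. Applying the inverse linear transformation $\cT_{W^T}$ and using the direct-sum compatibility just proved, $(A_0,A)=\cT_{W^T}\big((B_0^1,B^1)\oplus(B_0^2,B^2)\big)=\cT_{W^T}(B_0^1,B^1)\oplus\cT_{W^T}(B_0^2,B^2)$, and since $\cT_{W^T}$ is invertible the second summand is still nonzero. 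Moreover $\cT_{W^T}$ maps $\cT_W(\cH_{(A_0,A)})$ back to $\cH_{(A_0,A)}$ (by invertibility of the transformation, with inverse $\cT_{W^{-1}}$ at the level of sets — here one checks $\cT_{W^T}(\cH_{(B_0^1,B^1)})=\cH_{\cT_{W^{-T\cdot T}}(B_0^1,B^1)}=\cH_{\cT_{W^{-1}\ldots}}$; the bookkeeping of transposes must be done carefully but is routine), so $\cT_{W^T}(B_0^1,B^1)$ is a proper subpencil which is a defining subpencil for $\cH_{(A_0,A)}$, contradicting minimality of $(A_0,A)$.

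The main obstacle I anticipate is purely clerical rather than conceptual: keeping the transpose/inverse decorations ($W$ versus $W^{-T}$ versus $W^T$) consistent across equation \eqref{eq:LambdaTransformEq}, equation \eqref{eq:HomSpecTransform}, and the back-and-forth in the contradiction argument, and making sure that the linear transformation's commuting with orthogonal/invertible conjugation is invoked correctly (this is where the ``$\cT_W(UAV)=U\cT_W(A)V$'' statement is used, with $U=V^T$ an orthogonal matrix). Once the $\cT_W$-commutes-with-direct-sums and $\cT_W$-commutes-with-conjugation facts are in hand, the reduction to \cite[Theorem 3.12]{HKM13}-style minimality is mechanical. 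It may be cleanest to phrase the minimality conclusion using the size characterization (any defining tuple for $\cT_W(\cH_{(A_0,A)})$ pulls back via the invertible $\cT_{W^T}$ to a defining tuple for $\cH_{(A_0,A)}$ of the same size, so the minimal sizes agree), which sidesteps the subpencil bookkeeping entirely.
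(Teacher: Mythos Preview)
Your proposal is correct and follows essentially the same approach as the paper: both arguments dispose of the direct-sum and conjugation compatibility as routine computations, and then transfer a non-minimality witness (a proper defining subpencil after conjugation by an invertible $V$) across the linear transformation using those two compatibility facts together with Lemma~\ref{lem:HomSpecTransformProps}.

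One small structural difference is worth noting. You argue the contrapositive of the stated implication: assuming $\cT_{W^{-T}}(A_0,A)$ is not minimal, you pull the decomposition back via $\cT_{W^T}$ to exhibit a proper defining subpencil of (a conjugate of) $(A_0,A)$. The paper instead assumes $(A_0,A)$ is not minimal and pushes the decomposition forward to $\cT_{W^{-T}}(A_0,A)$ --- literally the \emph{converse} implication --- and then implicitly relies on the symmetry $W\leftrightarrow W^{-1}$ (the statement holds for all invertible $W$) to recover the direction asserted in the lemma. Your direction is the one that proves the lemma as stated without that extra symmetry step, so it is arguably the cleaner of the two. Your remark that the only real hazard is bookkeeping the $W$, $W^{-T}$, $W^T$ decorations is accurate; once you use $\cT_W(UAV)=U\,\cT_W(A)\,V$ to commute $\cT_{W^{-T}}$ past the invertible conjugation $V^T(\cdot)V$, the argument goes through exactly as you outline.
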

\begin{proof}
It is a straightforward computation to check that linear transformations respect direct sums and left and right matrix multiplications. To prove the last claim, assume that $W \in M_{g+1} (\R)$ is invertible and that $(A_0,A)$ is not a minimal defining tuple for $\cH_{(A_0,A)}$. We will show that $\cT_{W^{-T}} (A_0,A)$ is not a minimal defining tuple for $\cT_W (\cH_{(A_0,A)})=\cH_{\cT_{W^{-T}} (A_0,A)}$. 

Since $(A_0,A)$ is not a minimal defining tuple for $\cH_{(A_0,A)}$, there is an invertible matrix $V \in M_d (\R)$ such that $V^T (A_0,A) V = (A_0^1,A^1) \oplus (A_0^2,A^2)$ where 
\[
\Lambda_{(A_0^1,A^1)} (X_0,X) \succeq 0 \qquad \mathrm{implies} \qquad \Lambda_{(A_0^2,A^2)} (X_0,X) \succeq 0.
\]
for any $(X_0,X) \in \smrgp$. Using the fact that linear transformations respect direct sums and left and right matrix multiplication we find that $V^T \cT_{W^{-T}} (A_0,A)V$ is equal to $\cT_{W^{-T}} (A_0^1,A^1) \oplus \cT_{W^{-T}} (A_0^2,A^2)$, and an application of Lemma \ref{lem:HomSpecTransformProps} shows that
\[
\Lambda_{\cT_{W^{-T}}(A_0^1,A^1)} \cT_W (X_0,X) \succeq 0 \qquad \mathrm{implies} \qquad \Lambda_{\cT_{W^{-T}}(A_0^2,A^2)} \cT_W (X_0,X) \succeq 0.
\]
We conclude that $\cH_{\cT_{W^{-T}}(A_0^1,A^1)} = \cH_{\cT_{W^{-T}}(A_0,A)}$, hence $\cT_{W^{-T}}(A_0,A)$ is not a minimal defining tuple.
\end{proof}

Using Lemma \ref{lem:HomMinDefiningTransform} allows us to obtain an analogue of Lemma \ref{lem:HomSpecTransformProps} for projective transformations.

\begin{lem}
Let $A \in \smdrg$ be a minimal defining tuple for the free spectrahedron $\cD_A$ and let $W \in M_{g+1} (\R)$ be an invertible matrix such that $\cT_W$ is a positive linear  transformation of the homogeneous free spectrahedron $\cH_{(I,A)}$. Then
\beq
\label{eq:ProjSpecTransform}
\cP_W (\cD_{(A)})= \cD_{\cP_{W^{-T}} (A)}.
\eeq
In particular, an invertible projective transformation of a free spectrahedron is again a free spectrahedron
\end{lem}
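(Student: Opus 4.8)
The plan is to unwind the definitions of $\cP_W$ and $\cP_{W^{-T}}$, move the computation into the homogeneous setting where Lemma~\ref{lem:HomSpecTransformProps} controls how a linear transformation acts on a defining tuple, and then transfer back down by checking that all the homogeneous tuples involved have positive definite homogeneous component, so that $\hom^{-1}$ collapses to the explicit formula $\hom^{-1}(\cH_{(C_0,C)}) = \cD_{C_0^{-1/2}CC_0^{-1/2}}$ valid when $C_0 \succ 0$.

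Concretely, I would set $(B_0,B) := \cT_{W^{-T}}(I,A)$. Since $A$ is a minimal defining tuple for $\cD_A$, we have $\hom(\cD_A) = \cH_{(I,A)}$, so equation~\eqref{eq:HomSpecTransform} of Lemma~\ref{lem:HomSpecTransformProps} gives $\cT_W(\cH_{(I,A)}) = \cH_{(B_0,B)}$, which by hypothesis is a positive homogeneous free spectrahedron. The key preliminary step is to observe that $(I,A)$ is itself a minimal defining tuple for $\cH_{(I,A)}$: this follows from Lemma~\ref{lem:homspecbasic}\eqref{it:minimalhom} when $\cD_A$ is bounded, and in general from \cite[Theorem~3.12]{HKM13}, since a proper defining subpencil of $\Lambda_{V^T(I,A)V}$ would (its homogeneous component being congruent to $I$, hence still positive definite, and remaining so after orthogonal block diagonalization) yield, via $\hom^{-1}(\cH_{(C_0,C)})=\cD_{C_0^{-1/2}CC_0^{-1/2}}$, a defining tuple for $\cD_A = \hom^{-1}(\cH_{(I,A)})$ of strictly smaller size, contradicting minimality of $A$. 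Given this, Lemma~\ref{lem:HomMinDefiningTransform} shows $(B_0,B)$ is a minimal defining tuple for $\cH_{(B_0,B)}$, and since $\cH_{(B_0,B)}$ is positive, Lemma~\ref{lem:homspecbasic}\eqref{it:poshomspecA0pos} gives $B_0 \succ 0$.

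With $B_0 \succ 0$ the rest is bookkeeping. On the one hand, $\cP_W(\cD_A) = \hom^{-1}(\cT_W(\hom(\cD_A))) = \hom^{-1}(\cH_{(B_0,B)}) = \cD_{B_0^{-1/2}BB_0^{-1/2}}$ by the explicit description of $\hom^{-1}$ of a homogeneous free spectrahedron with positive definite homogeneous component. On the other hand $\cP_{W^{-T}}(A) = \hom^{-1}(\cT_{W^{-T}}(\hom(A))) = \hom^{-1}(B_0,B) = B_0^{\dagger/2}BB_0^{\dagger/2} = B_0^{-1/2}BB_0^{-1/2}$, again using $B_0\succ 0$. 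Hence $\cP_W(\cD_A) = \cD_{\cP_{W^{-T}}(A)}$, and the ``in particular'' clause is immediate since the right-hand side is by definition a free spectrahedron.

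I expect the main obstacle to be exactly the middle step: carrying the minimality hypothesis from the nonhomogeneous tuple $A$ over to the homogeneous tuples $(I,A)$ and $(B_0,B)$, which is what licenses the use of Lemma~\ref{lem:homspecbasic}\eqref{it:poshomspecA0pos} to conclude $B_0 \succ 0$. This is where one must be careful in the spirit of Remark~\ref{rem:BadHomogenization}, in which non-minimality of the defining tuple makes the homogenization depend on the choice of tuple; concretely the delicate point is verifying that positive definiteness of the homogeneous component survives the congruence and the subsequent orthogonal block decomposition. Everything else reduces to the linear-transformation identities of Lemmas~\ref{lem:HomSpecTransformProps} and~\ref{lem:HomMinDefiningTransform} together with the explicit formula for $\hom^{-1}$.
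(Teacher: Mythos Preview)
Your proposal is correct and follows essentially the same route as the paper: unwind $\cP_W(\cD_A)$ to $\hom^{-1}(\cH_{\cT_{W^{-T}}(I,A)})$ via Lemma~\ref{lem:HomSpecTransformProps}, use Lemma~\ref{lem:HomMinDefiningTransform} to see that $\cT_{W^{-T}}(I,A)$ is a minimal defining tuple, apply Lemma~\ref{lem:homspecbasic}\eqref{it:poshomspecA0pos} to get positive definiteness of the homogeneous component, and then identify $\hom^{-1}$ with the explicit formula. If anything, you are slightly more careful than the paper in explicitly arguing that minimality of $A$ forces minimality of $(I,A)$ for $\cH_{(I,A)}$, a step the paper uses without comment.
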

\begin{proof}
By definition we have $\hom (\cD_A) = \cD_{\hom (A)}$ hence an application of Lemma \ref{lem:HomSpecTransformProps} gives
\[
\cP_W (\cD_A) = \hom^{-1} (\cH_{\cT_{W^{-T}} (\hom (A))}).
\]
Furthermore, Lemma \ref{lem:HomMinDefiningTransform} shows that $\cT_{W^{-T}} (\hom (A))$ is a minimal defining tuple for $\cH_{\cT_{W^{-T}} (\hom (A))}$, hence we may use Lemma \ref{lem:homspecbasic} item \eqref{it:poshomspecA0pos} to conclude that the homogeneous component of this matrix tuple is positive definite. It follows that $\hom^{-1} (\cT_{W^{-T}} (\hom (A)))$ is well defined and that
\[
\cP_W (\cD_A) =\cD_{\hom^{-1}(\cT_{W^{-T}} (\hom (A)))} = \cD_{\cP_{W^{-T}} (A)}.
\]
\end{proof}

\subsubsection{Boundedness of free spectrahedra under projective mappings}

It is important to note that an invertible projective transformation of a free spectrahedron does not necessarily result in a bounded free spectrahedron.

\begin{example}
\label{exa:UnboundedImage}
Let $\cC$ be the free square. That is $\cC$ is the free spectrahedron with defining pencil $L_C (X)$ where
\[
C = \left( \begin{pmatrix} 1 & 0 & 0 & 0 \\
0 & 0 & 0 & 0 \\
0 & 0 & -1 & 0 \\
0 & 0 & 0 & 0
\end{pmatrix} ,
\begin{pmatrix} 0 & 0 & 0 & 0 \\
0 & 1 & 0 & 0 \\
0 & 0 & 0 & 0 \\
0 & 0 & 0 & -1
\end{pmatrix} 
\right).
\]
In addition let $W \in M_{3} (\R)$ be the matrix
\[
W=\begin{pmatrix} 0 & 1 & -1 \\
1 & -4 & 3  \\
-1 & 5 & -3
\end{pmatrix}
\]
Then $W$ is invertible and $\cT_W( \hom(\cC))$ is a positive homogeneous free spectrahedron, so $\cP_W (\cC)$ is well defined. However, for $-1 \leq y < 1$ we have 
\[
(1,y) \in \cC (1) \qquad \mathrm{and} \qquad \cP_W (1,y)= \left(-3,3+\frac{1}{1-y}\right) 
\]
which shows $\cP_W (\cC)$ is not bounded.
\end{example}

The following lemma gives various conditions related to boundedness of the image of a free spectrahedron under a projective map.

\begin{lem}
\label{lem:BoundedNecessaryConditions}
Let $\cD_A \subset \smrg$ be a free spectrahedron and let $\cP_W$ be an invertible projective transformation of $\cD_A$. Write $W =(w_{i,j}) \in M_{g+1} (\R)$. Then we have the following.
\begin{enumerate}
\item \label{it:InvertibleSubMatrix} If $\cD_A$ is bounded, then the $g \times g$ submatrix of $W$ which is obtained by deleting the first row and column of $W$ is invertible. 
\item \label{it:BoundedNecessary} If $\cP_W (\cD_A)$ is bounded, then $w_{1,1} >0$.
\item \label{it:BoundedSufficient} If $\cD_A$ is bounded and $w_{1,1}>0$ and $w_{1,j}=0$ for all $j>0$, then $\cP_W (\cD_A)$ is bounded. 
\end{enumerate} 
\end{lem}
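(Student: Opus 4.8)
The plan is to treat the three items separately, using the homogenization machinery and the explicit description $\cP_W(\cD_A) = \hom^{-1}(\cT_W(\hom(\cD_A)))$ together with Lemma \ref{lem:HomSpecTransformProps}, which gives $\cT_W(\cH_{(I,A)}) = \cH_{\cT_{W^{-T}}(I,A)}$. Write $(B_0,B) := \cT_{W^{-T}}(I,A)$, so $\cP_W(\cD_A) = \hom^{-1}(\cH_{(B_0,B)}) = \cD_{B_0^{-1/2} B B_0^{-1/2}}$ once we know $B_0 \succ 0$ (which holds by Lemma \ref{lem:homspecbasic}\eqref{it:poshomspecA0pos} since $\cT_W$ is a positive transformation and $(I,A)$ is minimal, hence $(B_0,B)$ is minimal by Lemma \ref{lem:HomMinDefiningTransform}). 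The key recurring idea for boundedness is that $\cH_{(A_0,A)}$ is bounded if and only if the only tuple of the form $(0,X) \in \cH_{(A_0,A)}$ is $(0,0)$ — equivalently, the homogeneous pencil $\Lambda_A$ has no nonzero tuple $X$ with $\Lambda_A(X) \succeq 0$; unboundedness of $\cD_A$ produces a ray, and conversely a ray in $\cD_A$ gives such an $X$. I would record this characterization first, since all three parts use some variant of it.

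For item \eqref{it:InvertibleSubMatrix}: suppose the lower-right $g \times g$ block $\widehat W$ of $W$ is singular, so there is a nonzero $v \in \R^g$ with $\widehat W v = 0$; then $W(0,v)^T = (c, 0)^T$ for some scalar $c$ (coming from the first row of $W$). Consider the tuple $(0, v \otimes Z)$ for a symmetric matrix $Z$ — more precisely, plug tuples $(0,X)$ into $\cT_W$ and observe that $\cT_W$ sends the ``purely spatial'' directions into directions whose homogeneous component is controlled by $\widehat W$; the vanishing direction $v$ in the kernel of $\widehat W$ produces, after applying $\cP_W$, an unbounded family inside $\cP_W(\cD_A)$, contradicting... wait — we want to conclude $\cD_A$ bounded forces $\widehat W$ invertible, so the cleaner route is: if $\widehat W$ is singular, exhibit a ray in $\cD_A$ directly. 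Since $\cP_W$ is invertible with inverse $\cP_{W^{-1}}$, and since a singular $\widehat W$ means $W$ fails to be ``proper'' in the spatial variables, one shows $\cT_{W^{-T}}(I,A)$ has a homogeneous component that is not positive definite on the relevant subspace — but $(B_0,B)$ minimal forces $B_0 \succ 0$, a contradiction; so I would instead directly produce $X \ne 0$ with $L_A(tX) \succeq 0$ for all $t > 0$ by tracking the vector $v \in \ker \widehat W$ through the linear change of variables, which is the content I expect to need to unwind carefully. For item \eqref{it:BoundedNecessary}: $\cP_W(\cD_A)$ bounded means $(B_0,B)$ bounded, and evaluating the homogeneous pencil $\Lambda_{(B_0,B)}$ at the point $(1,0,\dots,0)$ recovers $B_0$, which relates to $w_{1,1}$ via the $(1,1)$ entry of the transformation; more concretely, boundedness of $\cD_{B_0^{-1/2}BB_0^{-1/2}}$ together with tracking the image of the interior point $(1,0,\dots,0)$ under $\cT_W$ — whose homogeneous coordinate is $w_{1,1}$ (since $\hom(0) = (1,0,\dots,0)$ and the first coordinate of $W(1,0,\dots,0)^T$ is $w_{1,1}$) — forces $w_{1,1} > 0$, because a nonpositive homogeneous coordinate there would put $0 \notin \cP_W(\cD_A)$ or produce an unbounded direction through the origin's preimage. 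For item \eqref{it:BoundedSufficient}: if $w_{1,j} = 0$ for $j > 0$ and $w_{1,1} > 0$, then $\cT_W$ maps $(I,X) \mapsto (w_{1,1} I, (\text{linear in } X))$, so $\cP_W(X) = w_{1,1}^{-1}(\widehat W X \text{-type expression})$ is literally an affine-linear (in fact linear) map in the spatial variables with invertible linear part $\widehat W/w_{1,1}$ (invertible by item \eqref{it:InvertibleSubMatrix} applied to $\cD_A$ bounded), and a linear image of a bounded set under an invertible linear map is bounded.

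The main obstacle I anticipate is item \eqref{it:InvertibleSubMatrix}: carefully extracting a genuine ray in $\cD_A$ (or a contradiction with $B_0 \succ 0$) from the singularity of $\widehat W$ requires bookkeeping of how $\cT_{W^{-T}}$ acts on the homogeneous versus spatial slots and relating $\ker \widehat W$ to the failure of boundedness; the other two items are comparatively routine once the homogenization dictionary and the ``boundedness $\Leftrightarrow$ no $(0,X)$ with $X \ne 0$'' criterion are in place. I would therefore spend most of the effort making the coordinate computation in \eqref{it:InvertibleSubMatrix} precise, and present \eqref{it:BoundedNecessary} and \eqref{it:BoundedSufficient} as short consequences.
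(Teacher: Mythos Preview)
Your treatments of items \eqref{it:BoundedNecessary} and \eqref{it:BoundedSufficient} are essentially the paper's: for \eqref{it:BoundedNecessary} the paper also tracks the image of $(1,0,\dots,0)$ under $\cT_W$ (whose homogeneous coordinate is $w_{1,1}$) and invokes Lemma~\ref{lem:homspecbasic}\eqref{it:homspecX0pos}; for \eqref{it:BoundedSufficient} the paper likewise observes that the first-row condition makes $\cP_W$ an invertible affine map on $\R^g$, and then uses that a matrix convex set is bounded iff its first level is.

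For item \eqref{it:InvertibleSubMatrix} you are working much harder than necessary. The paper's argument is a two-line trick you missed: since $\cP_{W^{-1}}$ is an invertible projective transformation of $\cP_W(\cD_A)$ whose image $\cD_A$ is bounded, item \eqref{it:BoundedNecessary} applied to $W^{-1}$ gives $(W^{-1})_{1,1}>0$; by Cramer's rule $(W^{-1})_{1,1}=\det(\widehat W)/\det(W)$, so $\widehat W$ is invertible. Your direct approach of extracting a ray in $\cD_A$ from a vector $v\in\ker\widehat W$ can be made to work --- $\cT_W(0,v)=(c,0,\dots,0)$ with $c\neq 0$ by invertibility of $W$, so $(0,v/c)=\cT_{W^{-1}}(1,0,\dots,0)\in\cH_{(I,A)}$, whence $\Lambda_A(v/c)\succeq 0$ and $t v/c\in\cD_A$ for all $t\geq 0$ --- but your write-up never actually lands this computation, and the Cramer's-rule route avoids it entirely. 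Prove \eqref{it:BoundedNecessary} first and then derive \eqref{it:InvertibleSubMatrix} from it.
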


\begin{proof}
We first prove item \eqref{it:BoundedNecessary}. Considering the image of $(1,0,\dots,0) \in \R^{g+1}$ under $\cT_W$ shows that $(w_{1,1}, w_{1,2}, \dots, w_{1,g+1}) \in \cT_W (\hom (\cD_A))$. Furthermore, from our definitions of projective mappings and boundedness for homogeneous free spectrahedra, we know that $\cT_W (\hom (\cD_A))$ is a bounded positive homogeneous free spectrahedron. Since $\cT_W$ is invertible we have $(w_{1,1}, w_{1,2}, \dots, w_{1,g+1}) \neq 0$, hence an application of Lemma \ref{lem:homspecbasic} \eqref{it:homspecX0pos} shows $w_{1,1} >0$.

We now prove item \eqref{it:InvertibleSubMatrix}. In this case $\cP_{W^{-1}}$ is a projective transformation of $\cP_W (\cD_A)$ to $\cD_A$, so using item \eqref{it:BoundedNecessary} shows that the $1,1$ entry of $W^{-1}$ is positive. The invertibility of the desired submatrix of $W$ follows from an application of Cramer's rule. 

To prove \eqref{it:BoundedSufficient} observe that if $w_{1,j}=0$ for each $j>1$, then $\cP_W$ acts on as an invertible affine transformation on $\R^g$, hence $\cP_W$ maps bounded subsets of $\R^g$ to bounded subsets of $\R^g$. It follows that level one of $\cP_W (\cD_A)$ is bounded, hence $\cP_W (\cD_A)$ is bounded. 
\end{proof}

\subsection{Projective transformations and the Arveson boundary}

We now establish the main result of the section. Namely, we show that invertible linear and projective transformations of free spectrahedra map Arveson boundary to Arveson boundary. 

\begin{theorem}
\label{thm:ArvMappings}
Let $A \in \smdrg$ and let $W \in M_{g+1} (\R)$ be a matrix such that $\cP_W$ is an invertible projective transformation of $\cD_{A}$. Assume that both $\cD_A$ and $\cP_W (\cD_A)$ are bounded free spectrahedra. Then for any $X \in \smrg$ one has $X \in \arv \cD_A$ if and only if $\cP_W (X) \in \arv (\cP_W (\cD_A))$. Equivalently one has
\[
\arv (\cP_W (\cD_A))=\cP_W (\arv \cD_A)).
\]
\end{theorem}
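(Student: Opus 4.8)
The plan is to reduce the statement about projective transformations of free spectrahedra to the setting of \emph{linear} transformations of their \emph{homogenizations}, and then to verify directly that an invertible linear transformation $\cT_{W}$ carries the Arveson boundary of a homogeneous free spectrahedron onto the Arveson boundary of its image. The key observation is that, by Lemma~\ref{lem:HomMinDefiningTransform}, $\cT_{W^{-T}}$ sends minimal defining tuples to minimal defining tuples, so if $(I,A)$ is a minimal defining tuple for $\hom(\cD_A)$ then $\cT_{W^{-T}}(I,A)$ is a minimal defining tuple for $\cT_W(\hom(\cD_A))$, and by Lemma~\ref{lem:homspecbasic}\eqref{it:poshomspecA0pos} its homogeneous component is positive definite. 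Hence all the hypotheses needed to invoke the homogeneous Arveson kernel condition, Theorem~\ref{thm:HomogenousArvesonKernelCondition}, are in place on both sides.

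First I would set up notation: pass to a minimal defining tuple $\check A$ for $\cD_A$, write $(I,\check A)$ for the associated homogenization, and let $(B_0,B) = \cT_{W^{-T}}(I,\check A)$, which is a minimal defining tuple for $\cT_W(\hom(\cD_A))$; boundedness of $\cP_W(\cD_A)$ guarantees this homogeneous free spectrahedron is bounded and positive. Next I would record the compatibility of $\cP_W$ with the dilation structure: for a tuple $X \in \cD_A$ and any $\beta$, the homogenized tuple $\hom(X)$ has $X_0 = I$, so $\cP_W(X) = \hom^{-1}(\cT_W(I,X))$; the crucial point is that $X$ dilates inside $\cD_A$ (i.e. $\beta \ne 0$ is admissible in the sense of \eqref{eq:arvdef}) \emph{if and only if} $\cT_W(\hom(X))$ dilates inside $\cT_W(\hom(\cD_A))$, because $\cT_W \otimes I_n$ is an invertible linear bijection intertwining the two pencils via \eqref{eq:LambdaTransformEq}, and dilation of a tuple $(X_0,X)$ by a border row $\beta$ corresponds to a subspace/kernel containment that is preserved under the invertible congruence $(W^{-T}\otimes I)\,\Lambda_{(I,\check A)}(\,\cdot\,)\,(W^{-1}\otimes I)$. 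Concretely I would translate ``$X \in \arv\cD_A$'' through Theorem~\ref{thm:ArvImpliesEuc}\eqref{it:KerCondition} into the kernel condition for $\check A$, then push that kernel condition through the congruence and through the identification $\cH_{(B_0,B)} = \cT_W(\cH_{(I,\check A)})$, landing — via Theorem~\ref{thm:HomogenousArvesonKernelCondition} applied to $(B_0,B)$ at the tuple $\hom(\cP_W(X))$, or more cleanly via the \emph{definition} of the Arveson boundary of a homogeneous free spectrahedron together with $\hom^{-1}$ — at ``$\cP_W(X) \in \arv(\cP_W(\cD_A))$''. Because $\cP_{W^{-1}}$ is the inverse projective transformation (and $\cP_{W^{-1}}(\cP_W(\cD_A)) = \cD_A$ is bounded), the reverse implication is the same argument applied to $W^{-1}$, which immediately yields the set equality $\arv(\cP_W(\cD_A)) = \cP_W(\arv\cD_A)$.

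The main obstacle I anticipate is the bookkeeping around the dilation/kernel correspondence when the homogeneous component is not the identity, i.e. making precise that $(X_0,X)$ being in the Arveson boundary of $\cH_{(B_0,B)}$ is genuinely equivalent to $\hom^{-1}(X_0,X) = X_0^{-1/2}XX_0^{-1/2}$ being in $\arv(\hom^{-1}(\cH_{(B_0,B)}))$, and that this $\hom^{-1}$ commutes appropriately with the linear map — so that applying $\hom^{-1}$ after $\cT_W$ to a tuple with $X_0 = I$ really does produce $\cP_W(X)$ and preserves the Arveson property. This is exactly the content packaged by the definition of $\arv$ for homogeneous free spectrahedra given just before Theorem~\ref{thm:HomogenousArvesonKernelCondition}, so the right strategy is to state the proof at the level of that definition (using Lemma~\ref{lem:homspecbasic}\eqref{it:homspecX0pos} to control $X_0 \succeq 0$ and $\ker X_0 \subset \ker X_i$, hence to reduce to the block where $X_0 \succ 0$), rather than re-deriving everything by hand with kernels; Theorem~\ref{thm:HomogenousArvesonKernelCondition} is then only needed as a sanity check / alternative route. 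A secondary, purely technical point is verifying that $\cP_W(X)$ is well defined for $X$ ranging over $\arv\cD_A$, which follows since the homogeneous component of $\cT_W(\hom(X))$ is positive semidefinite by Lemma~\ref{lem:homspecbasic}\eqref{it:homspecX0pos}; in fact, when $\cP_W(\cD_A)$ is bounded one checks $w_{1,1} > 0$ by Lemma~\ref{lem:BoundedNecessaryConditions}\eqref{it:BoundedNecessary}, so the homogeneous components stay strictly positive along the image of level-one points and one need not worry about degeneration on the boundary.
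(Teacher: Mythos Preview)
Your overall strategy matches the paper's: reduce to linear transformations of homogenizations, push the kernel containment from Theorem~\ref{thm:ArvImpliesEuc}\eqref{it:KerCondition} through the identity of Lemma~\ref{lem:HomSpecTransformProps}, land on the homogeneous side via Theorem~\ref{thm:HomogenousArvesonKernelCondition}, and invoke invertibility of $\cP_W$ for the reverse implication. Where you diverge is in claiming that Theorem~\ref{thm:HomogenousArvesonKernelCondition} can be bypassed by working ``at the level of the definition'' of $\arv\cH_{(B_0,B)}$. That route is circular: by definition, $(\tY_0,\tY)\in\arv\cH_{(B_0,B)}$ (with $\tY_0\succ 0$) means precisely that $\tY_0^{-1/2}\tY\,\tY_0^{-1/2}\in\arv\bigl(\hom^{-1}\cH_{(B_0,B)}\bigr)=\arv(\cP_W(\cD_A))$, and since $\tY_0^{-1/2}\tY\,\tY_0^{-1/2}=\cP_W(X)$ this merely restates the conclusion you are trying to prove.

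Theorem~\ref{thm:HomogenousArvesonKernelCondition} is not a sanity check but the crux of the argument, because $\cT_W$ sends $(0,\beta)$ to a pair $(\gamma_0,\gamma)$ whose homogeneous component $\gamma_0$ is typically nonzero. The transported containment $\ker\Lambda_{(B_0,B)}(\cT_W(I,X))\subseteq\ker\Lambda_{(B_0,B)}(\gamma_0,\gamma)$ is therefore \emph{not} of the form demanded by the nonhomogeneous criterion Theorem~\ref{thm:ArvImpliesEuc}\eqref{it:KerCondition}, and one cannot read off ``$\cP_W(X)\notin\arv$'' directly. The paper handles this by normalizing (conjugating by $\tY_0^{-1/2}$ and $B_0^{-1/2}$), splitting into the cases $\gamma_0=0$ versus $\gamma_0\neq 0$, reducing in the latter case via the pseudoinverse and a unitary to $\gamma_0^\dagger\gamma_0=I_m\oplus 0_{n-m}$, applying Theorem~\ref{thm:HomogenousArvesonKernelCondition} by contradiction, and finishing with Lemma~\ref{lem:homspecbasic}\eqref{it:homspecX0pos}. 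Your plan needs this step, and you have correctly identified it as the main obstacle; the resolution is to use Theorem~\ref{thm:HomogenousArvesonKernelCondition} rather than the definition.
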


\begin{proof}
As previously mentioned, $\cP_W$ is invertible if and only if the matrix $W$ is invertible and the inverse of $\cP_W$ is $\cP_{W^{-1}}$. Therefore, it is sufficient to show that if $X \notin \arv \cD_A $, then $\cP_W (X) \notin \arv \cP_W( \cD_A)$. In particular we show that $(I,X) \notin  \arv \cH_{(I,A)}$ implies $\cT_W (I,X) \notin \arv  \cT_W(\cH_{(I,A)})$. Furthermore, we may WLOG assume that $A$ is a minimal defining tuple for $\cD_A$, hence $(I,A)$ is a minimal defining tuple for $\cH_{(I,A)}$.

Assume $(I,X) \in \cH_{(I,A)} \backslash \arv \cH_{(I,A)}$. As a consequence of \cite[Lemma 2.1]{EH19}, there must exist some nonzero tuple $\beta \in M_n (\R)^g$ such that
\[
\ker \Lambda_{(I,A)} (I,X) \subseteq \ker \Lambda_{(I,A)} (0,\beta).
\]
Using Lemma \ref{lem:HomSpecTransformProps} we obtain
\beq
\label{eq:KernelTransformOne}
\ker \Lambda_{\cT_{W^{-T}} (I,A)} \cT_W (I,X) \subset \ker \Lambda_{\cT_{W^{-T}} (I,A)}  \cT_W (0,\beta).
\eeq
Set $(\tY_0,\tY):= \cT_W (I,X)$ and $(\tB_0,\tB)=\cT_{W^{-T}} (I,A)$. We first show that  $\tB_0 \succ 0$ and $\tY_0 \succ 0$

From our definition of a projective map, the homogeneous free spectrahedron $\cH_{\cT_{W^{-T}} (I,A)} $ is positive, and Lemma \ref{lem:HomMinDefiningTransform} shows that ${\cT_{W^{-T}} (I,A)}$ is a minimal defining tuple. It follows from Lemma \ref{lem:homspecbasic} \eqref{it:poshomspecA0pos} that $\tB_0 \succ 0$. Furthermore, using Lemma \ref{lem:HomMinDefiningTransform} together with Lemma \ref{lem:homspecbasic} \eqref{it:homspecX0pos} allows us to conclude that if $\tY_0$ has a nontrivial null space, then each component of $\cT_{W^{-1}} (\tY_0,\tY)=(I,X)$ must have a nontrivial null space, which not possible. Therefore we must have $\tY_0 \succ 0$. 

Having shown $\tY_0 \succ 0$ and $\tB_0 \succ 0$, we introduce the following notation:
\[
(I,Y):= \tY_0^{-1/2} (\tY_0,\tY) \tY_0^{-1/2}, \qquad (\gamma_0,\gamma):= \tY_0^{-1/2} \cT_W (0,\beta) \tY_0^{-1/2} , \qquad (I,B):= \tB_0^{-1/2} (\tB_0,\tB) \tB_0^{-1/2}.
\]
With this notation, equation \eqref{eq:KernelTransformOne} is equivalent to
\beq
\label{eq:KernelTransformTwo}
\ker \Lambda_{(I,B)} (I,Y) \subseteq \ker \Lambda_{(I,B)} (\gamma_0,\gamma).
\eeq
Note that $(\gamma_0,\gamma) \neq 0$ since $\cT_W$ is invertible and $(0,\beta) \neq 0$ by assumption. Therefore if $\gamma_0 =0$, then $\gamma \neq 0$. In this case we may use Theorem \ref{thm:ArvImpliesEuc} \eqref{it:KerCondition} together with equation \eqref{eq:KernelTransformTwo} to conclude $(I,Y) \notin \arv \cT_W (\cH_{(A_0,A)})$, hence $\cT_W (I,X) \notin \arv \cT_W (\cH_{(A_0,A)})$.

Now assume $\gamma_0 \neq 0$. In this case, equation \eqref{eq:KernelTransformTwo} implies
\beq
\label{eq:KernelTransformThree}
\ker \Lambda_{(I,B)} (I,Y) \subseteq \ker (I \otimes \gamma_0^\dagger) \Lambda_{(I, B)} (\gamma_0,\gamma)=\ker \Lambda_{(I, B)} ( \gamma_0^\dagger \gamma_0,\gamma_0^\dagger \gamma).
\eeq
Note that there exists a unitary $U$ such that $U^T\gamma_0^\dagger \gamma_0 U=I_m \oplus 0_{n-m}$ for some positive integer $m \leq n$. Left multiplying equation \eqref{eq:KernelTransformThree} by $I \otimes U^T$ and right multiplying by $I \otimes U$. gives
\beq
\label{eq:KernelTransformFour}
\ker \Lambda_{(I,B)} (I,U^T YU) \subseteq \ker \Lambda_{(I, B)} ( I_m \oplus 0_{n-m}, U^T \gamma_0^\dagger \gamma U).
\eeq
Since the Arveson boundary is closed under unitary conjugation, we can without loss of generality assume $U=I$ and that $\gamma^\dagger_0 \gamma_0 = I_m \oplus 0_{n-m}$. 

Write 
\[
 Y =
\begin{pmatrix}
\Psi^{11} & \Psi^{12} \\
\Psi^{21} & \Psi^{22}
\end{pmatrix}
\]
with respect to the block decomposition of $I_m \oplus 0_{n-m}$ and suppose towards a contradiction that $(I,Y) \in \arv(\cH_{(I,B)})$. In this case Theorem \ref{thm:HomogenousArvesonKernelCondition} implies that 
\beq
\label{eq:BadEqualsTransform}
(I_m,\Psi^{11})=(I_m,V^T \gamma_0^\dagger \gamma V)
\eeq 
 where $V:R^m \to \R^n$ is the isometry 
\[
V=\begin{pmatrix}
I_{m \times m} \\
0_{(n-m) \times m}
\end{pmatrix}.
\]

Expanding the right hand side of  \eqref{eq:BadEqualsTransform} gives 
\beq
\label{eq:LongFormBeta}
(I_m,\Psi^{11})=V^T \gamma^\dagger_0 \tilde{Y}_0^{-1/2} \cT_W (0,\beta) \tilde{Y}_0^{-1/2} V
\eeq
Lemma \ref{lem:HomMinDefiningTransform} shows that linear transformations respect left and right matrix multiplication so we obtain
\[
(I_m,\Psi^{11}) = V^T ( \gamma^\dagger_0 \gamma_0, \gamma_0^\dagger \gamma ) V = \cT_W (0,V^T \gamma^\dagger_0 \tilde{Y}_0^{-1/2} \beta \tilde{Y}_0^{-1/2} V)
\]
or equivalently
\beq
\label{eq:BadMapTransform}
\cT_{W^{-1}} (I_m, \Psi^{11})  =(0,V^T \gamma^\dagger_0 \tilde{Y}_0^{-1/2} \beta \tilde{Y}_0^{-1/2} V).
\eeq

To complete the proof note that $(I_m, \Psi^{11}) \in \cT_{W} (\cH_{(A_0,A)})$ since homogeneous free spectrahedra are closed under isometric conjugation, therefore we must have 
\[
(0,V^T \gamma^\dagger_0 \tilde{Y}_0^{-1/2} \beta \tilde{Y}_0^{-1/2} V) \in \cH_{(A_0,A)}.
\]
Using Lemma \ref{lem:homspecbasic} \eqref{it:homspecX0pos}, we conclude that 
\[
(0,V^T \gamma^\dagger_0 \tilde{Y}_0^{-1/2} \beta \tilde{Y}_0^{-1/2} V) = (0,0) \in \smmrgp,
\]
a contradiction to equation \eqref{eq:BadMapTransform}. We conclude that $(I,Y) \notin \arv \cH_{(A_0,A)}$ and moreover $(\tY_0,\tY)=\cT_W (I,X) \notin \arv \cH_{(A_0,A)}$.  
\end{proof}

\subsection{Projective transformations vs. Euclidean extreme points}

We end the section by briefly examining the relationship between projective and Euclidean extreme points. A perhaps surprising fact is that the image of a Euclidean extreme point under a projective transformation is not necessarily a Euclidean extreme point. 

\begin{example}
Let $\cC$ be the free matrix square as defined in example \ref{exa:UnboundedImage}. Set 
\[
X = \left( \begin{pmatrix} \frac{1}{\sqrt{2}} & \frac{1}{\sqrt{2}} \\ \frac{1}{\sqrt{2}} & \frac{-1}{\sqrt{2}}
\end{pmatrix},
\begin{pmatrix}
1 & 0 \\
0 & 0
\end{pmatrix}\right)
\]
The tuple $X$ is a boundary point of $\cC$ but is not a Euclidean extreme point of $\cC$. However, if one sets 
\[
V=  \frac{1}{280}  \begin{pmatrix} 306 & 0 & 54 \\
-54 & 63 & -108 \\
27 & 126 & 27
\end{pmatrix}
\]
then $\cP_V$ is an invertible projective map of $\cC$ and $\cP_V(\cC)$ is a bounded free spectrahedron. Moreover, by checking the appropriate kernel condition described in \cite[Corollary 2.3]{EHKM18} it can be shown that $\cP_V (X)$ is a Euclidean extreme point of $\cP_V (\cC)$.
\end{example}

\section{The Arveson Boundary of a Free Quadrilateral}
\label{sec:QuadrilateralArvesonBoundary}

In this section we give our classification of the Arveson boundary  of free quadrilaterals. We begin by examining the free square. Recall that the free square is the free spectrahedron in two variables defined by
\[
\cC=\{X \in SM(\R)^2: I \succeq X_1^2 \mathrm{\ and \ } I\succeq X_2^2\}.
\]
Equivalently $\cC$ is the free spectrahedron defined by the linear pencil
\[
L_C (x)=I+
\begin{pmatrix}
1 & 0 & 0 & 0 \\
0 & 0 & 0 & 0 \\
0 & 0 & -1 & 0 \\
0 & 0 & 0 & 0 \\
\end{pmatrix}
x_1
+\begin{pmatrix}
0 & 0 & 0 & 0 \\
0 & 1 & 0 & 0 \\
0 & 0 & 0 & 0 \\
0 & 0 & 0 & -1 \\
\end{pmatrix}
x_2.
\]
As shown by \cite[Proposition 7.1]{EHKM18} the Arveson boundary of the free square is given by
\[
\arv \cC=\euc \cC=\{X \in M(\R^2): I - X_1^2 = 0 \mathrm{\ and \ } I-X_2^2 = 0 \}.
\]
That is, the Arveson boundary of the free square is a noncommutative variety.

We shall use projective mappings of the free square to obtain noncommutative polynomials which annihilate Arveson boundary of any given free quadrilateral. It is well known in the classical setting that all quadrilaterals are projectively equivalent, i.e. that they may be mapped to each other via invertible projective transformations. The following lemma extends this result to the free setting.

\begin{lem}
\label{lem:CubeProjMaps}
Let $\cD_A$ and $\cD_B$ be any free quadrilaterals. Then there exists an invertible projective mapping $\cP_W$ such that $\cP_W (\cD_A)=\cD_B$.
\end{lem}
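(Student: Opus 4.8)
The statement is the free analogue of the classical fact that any two quadrilaterals are projectively equivalent, so the natural strategy is to reduce to that classical fact at level one and then check that the level-one projective map automatically does the right thing at all higher levels. First I would observe that it suffices to find, for a single fixed ``reference'' free quadrilateral $\cD_{B_0}$ (say the free square $\cC$, with defining tuple $C$ of $4\times 4$ diagonal matrices), an invertible projective map $\cP_W$ with $\cP_W(\cC) = \cD_A$; then composing two such maps (and using that projective maps are closed under composition and inversion, $\cP_W^{-1} = \cP_{W^{-1}}$, as recorded just before Example~\ref{exa:UnboundedImage}) yields a projective map between any $\cD_A$ and $\cD_B$.

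**Key steps.** (1) Let $A = (A_1,A_2) \in SM_4(\R)^2$ be a minimal defining tuple for $\cD_A$ consisting of $4\times 4$ diagonal matrices, which exists by definition of a free quadrilateral. Simultaneously diagonalizing, $\cD_A(1)$ is the classical quadrilateral cut out by the four affine inequalities $1 + (A_j)_{kk} x_j \geq 0$ for $k = 1,\dots,4$ (read row by row). (2) From the classical theory of projective transformations of the plane, there is an invertible $3\times 3$ real matrix $W$ so that the associated projective map of $\R^2$ carries the classical square $\cC(1)$ onto $\cD_A(1)$; equivalently, $\cT_W$ carries the cone over $\cC(1)$ onto the cone over $\cD_A(1)$. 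I would package this as: $\cT_{W^{-T}}(\hom(C))$ and $\hom(\cD_A) = \cH_{(I,A)}$ have the same level-one set. (3) Now upgrade from level one to all levels. Here is where the structural results of the paper do the work: by Lemma~\ref{lem:HomSpecTransformProps}, $\cT_W(\cH_{(I,C)}) = \cH_{\cT_{W^{-T}}(I,C)}$ is again a homogeneous free spectrahedron, and by Lemma~\ref{lem:HomMinDefiningTransform} its defining tuple $\cT_{W^{-T}}(I,C)$ is minimal (since $(I,C)$ is minimal — $C$ is a minimal defining tuple for the free square). Two bounded positive homogeneous free spectrahedra are determined by their defining tuples up to the equivalences in Lemma~\ref{lem:homspectuplequiv}/\ref{lem:homspecbasic}; more usefully, both $\cH_{\cT_{W^{-T}}(I,C)}$ and $\cH_{(I,A)}$ are bounded (boundedness of the image follows from Lemma~\ref{lem:BoundedNecessaryConditions} once one arranges, via the classical map, that the relevant submatrix condition holds — and in any case $\cD_A$ is bounded by hypothesis) with minimal defining tuples, and they agree at level one. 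Invoking \cite[Theorem 3.12]{HKM13} (minimal defining tuples of a free spectrahedron are unitarily equivalent, hence the free spectrahedron is determined by its level-one data together with minimality — the precise mechanism is that a free polytope, being defined by a commuting diagonal tuple, is the matrix convex hull / spectrahedral closure of its level-one set), conclude $\cH_{\cT_{W^{-T}}(I,C)} = \cH_{(I,A)}$, i.e. $\cT_W(\hom(\cC)) = \hom(\cD_A)$. (4) Finally check that $\cT_W$ is a \emph{positive} linear transformation of $\hom(\cC)$: this is exactly the statement that $\cH_{\cT_{W^{-T}}(I,C)}$ contains $(1,0,0)$ in the interior, which holds because its level-one set is $\hom^{-1}$ of $\cD_A(1)$, a quadrilateral with nonempty interior containing an interior point we may normalize to the origin after a translation absorbed into $W$. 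Hence $\cP_W = \hom^{-1}\circ \cT_W \circ \hom$ is a well-defined invertible projective map with $\cP_W(\cC) = \cD_A$, and composing gives the lemma.

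**Main obstacle.** The genuinely delicate point is step (3): passing from equality at level one to equality at all levels. For general free spectrahedra this is false, but free polytopes (defined by commuting diagonal tuples) are rigid — the homogeneous free spectrahedron $\cH_{(I,C)}$ is, up to unitary equivalence of its minimal defining tuple, pinned down by the finite list of linear functionals appearing on the diagonal, i.e. by $\cD_A(1)$. I expect the cleanest route is: transport minimality through $\cT_W$ (Lemma~\ref{lem:HomMinDefiningTransform}), note that a minimal defining tuple of a free polytope must again be a commuting diagonal tuple encoding the same finitely many half-space functionals as the level-one polytope, and then appeal to uniqueness of minimal defining tuples \cite[Theorem 3.12]{HKM13}; the remaining bookkeeping (boundedness of the image, positivity of the transformation, reduction to the reference quadrilateral) is routine given Lemmas~\ref{lem:homspecbasic} and~\ref{lem:BoundedNecessaryConditions}.
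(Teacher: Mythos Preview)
Your strategy is genuinely different from the paper's and, once patched, works. The paper (Appendix~\ref{sec:AppProjLemProof}) builds the projective map by an explicit three-stage construction on the diagonal defining tuple --- a linear map aligning two sides with the axes, a normalization fixing three corners, then a final map for the fourth --- checking at each stage that the homogeneous component stays positive definite. You instead import the classical projective equivalence at level one and argue it lifts automatically. This is more conceptual, and it sidesteps what the paper calls ``the key issue'' (tracking positivity through the construction): once you know $\cT_W(\cH_{(I,C)}) = \cH_{(I,A)}$ at all levels, positivity is immediate because $\cH_{(I,A)}$ is already positive.

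That said, your justification of step~(3) has a genuine gap. The appeal to \cite[Theorem~3.12]{HKM13} is circular: that result says two minimal defining tuples for the \emph{same} free spectrahedron are unitarily equivalent, whereas you need to conclude that two spectrahedra agreeing at level one are equal. And the parenthetical fallback --- that a free polytope is the matrix convex hull of its level-one set --- is false: the free square has irreducible Arveson extreme points at level~$2$ (e.g.\ $(\sigma_z,\sigma_x)$; cf.\ the proof of Proposition~\ref{prop:BoundedFreeExtDim}), so $\cC \supsetneq \comat(\cC(1))$. The correct argument is more elementary than either route you propose. Both $\cT_{W^{-T}}(I,C)$ and $(I,A)$ are tuples of $4\times4$ \emph{diagonal} matrices. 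For any diagonal tuple $D$ with rows $d^1,\dots,d^4\in\R^3$ one has $\cH_D=\{(X_0,X): d^k\!\cdot(X_0,X)\succeq 0\text{ for each }k\}$, an intersection of scalar half-spaces that is manifestly determined at every level by the polyhedral cone $\cH_D(1)$. Minimality forces the four rows of each tuple to be exactly the irredundant facet normals of the common level-one cone, so the two tuples differ only by permuting and positively rescaling rows --- i.e.\ by conjugation by an invertible $P\Lambda^{1/2}$ --- and Lemma~\ref{lem:homspectuplequiv} gives $\cH_{\cT_{W^{-T}}(I,C)}=\cH_{(I,A)}$. With this replacement your argument is complete and arguably cleaner than the paper's explicit computation.
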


Since a free spectrahedron is not uniquely determined by its first level, we cannot directly apply the classical result to obtain the result in the free setting. Similar to the classical case, we prove the result in free setting by constructing a sequence of projective maps which map the defining tuple of an arbitrary free quadrilateral to a sequence of standard defining tuples. The key issue compared to the classical setting is showing that one may preserve positivity of the homogeneous component in each step. Technical details are given in Appendix \ref{sec:AppProjLemProof}.

We are now in position to prove that the Arveson boundary of a free quadrilateral is determined by a noncommutative variety. 

\bs
\noindent\textit{Proof of Theorem \ref{thm:ArvIsVariety}}.
To begin, let $\cC_H$ denote the homogeneous free square. That is $\cC_H= \hom (\cC)$ where $\cC$ is as defined above. Set
\[
c_1(x_0,x):=x_0-x_1 x_0^{-1} x_1 \quad \quad c_2 (x_0,x):=x_0-x_2 x_0^{-1} x_2.
\]
Then for any tuple $(X_0,X) \in \smrgp$ with $X_0 \succ 0$ one has $(X_0,X) \in \arv \cC_H$ if and only $c_1 (X_0,X)=c_2 (X_0,X)=0.$

Using Lemma \ref{lem:CubeProjMaps} shows that there exists an invertible projective map $\cP_V$ such that $\cP_V(\cD_A)=\cC$. It follows that $\cT_V$ is a positive linear mapping of $\cH_{(I,A)}$ onto $\cC_H$. Theorem \ref{thm:ArvMappings} then implies that if $X_0$ is invertible then $(X_0,X) \in \arv \cH_{(I,A)}$ if and only if $X \in \cH_{(I,A)}$ and the rational functions $\hat{r}_1 (x_0,x)$ and $\hat{r}_2 (x_0,x)$ defined by
\[
\hat{r}_1 (x_0,x):=c_1(\cT_V (x_0,x)) \quad \quad \hat{r}_2(x_0,x):=c_2(\cT_V (x_0,x))
\]
satisfy $\hat{r}_1(X_0,X)=\hat{r}_2(X_0,X)=0$. In particular, we have $(I,X) \in \arv \cH_{(I,A)}$ and more over $X \in \arv \cD_A$ if and only if $X \in \cD_A$ and $r_1(X)=r_2(X)=0$ where $r_1$ and $r_2$ are rational functions defined by
\[
r_1(x):=\hat{r}_1 (1,x) \quad \quad r_2 (x):= \hat{r}_2 (1,x).
\]

Using the noncommutative Gr{\"o}bner basis algorithm found NCAlgebra's NCGBX package, it was computed that $r_1 (x)$ and $r_2 (x)$ generate a noncommutative Gr{\"o}bner basis of the form $\{p_1,p_2,p_3,p_4\}$ where $p_1,p_2,p_3,p_4$ are all polynomials. We conclude that the noncommutative polynomials $p_1,p_2,p_3,p_4$ satisfy $p_i (X)=0$ for $i=1,\dots, 4$ if and only if $r_1 (X)=r_2(X)=0$. It follows that $X \in \arv \cD_A$ if and only if $X \in \cD_A$ and $p_i (X)=0$ for $i=1,\dots, 4$.

As a technical note, to simplify the  Gr{\"o}bner basis computation we make the change of variables $z_1:=v_{21}+v_{22} x_1+ v_{23}x_2$ and $z_2:=v_{31}+v_{32} x_1+ v_{33}x_2$ where the $v_{ij}$ are the entries of the matrix $V$. Using Lemma \ref{lem:BoundedNecessaryConditions} \eqref{it:InvertibleSubMatrix} one may show that there exist constants $\alpha_0,\alpha_1,\alpha_2 \in \R$ such that and
\[
v_{11}+v_{12} x_1+ v_{13}x_2=\alpha_0+\alpha_1 z_1+\alpha_2 z_2,
\]
hence this change of variables is justified. In addition, the invertibility of $V$ guarantees that $\alpha_0 \neq 0$. 
\qed

\begin{remark}
A step by step computation of the noncommutative Gr{\"o}bner basis in the above proof as well as a computation using NCGBX can be found in the online appendix \url{https://github.com/NCAlgebra/UserNCNotebooks/tree/master/Evert/FreeQuadrilaterals}. In addition, the online appendix contains a Mathematica notebook with functions for computing the noncommutative polynomials which determine the Arveson boundary of a given free quadrilateral.
\end{remark}

\subsection{Free extreme points of free quadrilaterals}

We end the article with a brief examination of the free extreme points of free quadrilaterals. In particular we show that a free extreme point of a free quadrilateral must have bounded size.

\begin{prop}
\label{prop:BoundedFreeExtDim}
Let $\cD_A$ be a free quadrilateral. If $X \in SM_n (\R)^g$ is a free extreme point of $\cD_A$, then $n \leq 2$. As a consequence one has that $\cD_A$ is the matrix convex hull of the set of Arveson extreme points of $\cD_A$ which are elements of $\cD_A (2)$. In notation
\[
\cD_A = \comat ((\arv \cD_A) (2)).
\]
\end{prop}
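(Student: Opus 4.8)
The plan is to use an invertible projective map to replace $\cD_A$ by the free square $\cC$ (itself a free quadrilateral), where $\arv\cC=\{X:X_1^2=X_2^2=I\}$ is known explicitly, then bound the size of an irreducible tuple satisfying those relations, and finally assemble the displayed identity from Theorem~\ref{thm:AbsSpanMin}. Here $g=2$. By Theorem~\ref{thm:ArvImpliesEuc}\eqref{it:FreeIsArvIrred}, $X$ is a free extreme point of $\cD_A$ exactly when it is an irreducible element of $\arv\cD_A$, so it suffices to bound the size of an irreducible Arveson extreme point. By Lemma~\ref{lem:CubeProjMaps} pick an invertible projective transformation $\cP_W$ with $\cP_W(\cD_A)=\cC$; since $\cC$ is bounded, Theorem~\ref{thm:ArvMappings} gives $\cP_W(\arv\cD_A)=\arv\cC$. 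To see that $\cP_W$ preserves irreducibility, note that for $X\in\cD_A$ the homogeneous component $Y_0:=w_{11}I+w_{12}X_1+w_{13}X_2$ of $\cT_W(I,X)$ is positive definite: it is positive semidefinite by Lemma~\ref{lem:homspecbasic}\eqref{it:homspecX0pos}, and a nontrivial kernel of $Y_0$ would, after applying $\cT_{W^{-1}}$, be a common kernel of $I,X_1,X_2$, which is impossible. Hence each coordinate $\cP_W(X)_i=Y_0^{-1/2}(\cT_W(I,X))_i\,Y_0^{-1/2}$ is symmetric and lies in the unital real algebra generated by $X_1,X_2$ (because $Y_0^{-1/2}$ is a polynomial in $Y_0$, which is a linear combination of $I,X_1,X_2$); running the same argument with $\cP_{W^{-1}}$ shows this algebra equals the one generated by $\cP_W(X)_1,\cP_W(X)_2$. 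Since the reducing subspaces of a tuple of symmetric matrices are precisely the invariant subspaces of the generated algebra, $X$ is irreducible iff $\cP_W(X)$ is. So we are reduced to showing: an irreducible $X\in SM_n(\R)^2$ with $X_1^2=X_2^2=I$ has $n\le 2$.

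For this, assume $X_1$ and $X_2$ are not scalar (otherwise irreducibility forces $n=1$). The matrix $X_1X_2+X_2X_1$ is symmetric and, using $X_i^2=I$, is unchanged by conjugation by $X_1$ and by $X_2$, so it lies in the commutant $D=\{X_1,X_2\}'$; by irreducibility and Frobenius' theorem $D$ is $\R$, $\C$, or $\HH$, and the symmetric elements of each (embedded in $M_n(\R)$) are scalar, so $X_1X_2+X_2X_1=\lambda I$ for some $\lambda\in\R$. Then $X_2X_1=\lambda I-X_1X_2$ and $(X_1X_2)^2=\lambda X_1X_2-I$, so the algebra $\cA$ generated by $X_1,X_2$ is spanned by $\{I,X_1,X_2,X_1X_2\}$ and $\dim_\R\cA\le 4$. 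As $\cA$ acts faithfully and irreducibly on $\R^n$, the double centralizer theorem gives $\cA\cong M_r(D^{\mathrm{op}})$ with $n=r\dim_\R D$ and $r^2\dim_\R D=\dim_\R\cA\le 4$. If $D=\R$ then $r\le 2$ so $n\le 2$; if $D=\C$ then $r=1$ so $n=2$; if $D=\HH$ then $r=1$, $n=4$ and $\cA\cong\HH$, which is impossible since $\cA$ contains the non-scalar element $X_1$ with $X_1^2=I$ while $\pm 1$ are the only square roots of $1$ in $\HH$. Hence $n\le 2$.

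Combining with the reduction, every free extreme point of $\cD_A$ has size at most $2$, so $\abs\cD_A\subseteq(\arv\cD_A)(1)\cup(\arv\cD_A)(2)$. The Arveson boundary of a free spectrahedron is closed under direct sums: if $Z,Z'\in\arv\cD_A$ and $\bigl(\begin{smallmatrix}Z\oplus Z' & \beta\\ \beta^T & \gamma\end{smallmatrix}\bigr)\in\cD_A$, then compressing to the rows and columns of the $Z$-summand together with the $\gamma$-block (an isometric compression, under which $\cD_A$ is stable) yields a dilation of $Z$ in $\cD_A$, hence trivial; likewise for $Z'$; so $\beta=0$. Thus for $x\in(\arv\cD_A)(1)$ one has $x\oplus x\in(\arv\cD_A)(2)$ and $x=V^T(x\oplus x)V$ with $V=\tfrac1{\sqrt2}\bigl(\begin{smallmatrix}1\\1\end{smallmatrix}\bigr)$, so $(\arv\cD_A)(1)\subseteq\comat((\arv\cD_A)(2))$ and therefore $\abs\cD_A\subseteq\comat((\arv\cD_A)(2))$. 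Since $\cD_A$ is bounded (being a free quadrilateral), Theorem~\ref{thm:AbsSpanMin} gives $\cD_A=\comat(\abs\cD_A)\subseteq\comat((\arv\cD_A)(2))\subseteq\cD_A$, i.e. $\cD_A=\comat((\arv\cD_A)(2))$.

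The main obstacle is the size bound in the middle step: one needs genuine structural control on irreducible pairs of symmetric involutions, for which the commutant argument above (forcing $X_1X_2+X_2X_1$ to be scalar, equivalently exploiting that irreducible representations of $\mathbb{Z}_2*\mathbb{Z}_2$ are at most two–dimensional) seems to be the cleanest route. A secondary point requiring care is verifying that the projective transformation used in the reduction really does preserve irreducibility, which rests on the positive definiteness of the homogeneous component $\cT_W(I,X)_0$ for $X\in\cD_A$.
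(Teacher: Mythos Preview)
Your proof is correct, and the overall architecture matches the paper: reduce to the free square via Lemma~\ref{lem:CubeProjMaps} and Theorem~\ref{thm:ArvMappings}, check that the projective map preserves irreducibility, and then bound the size of an irreducible pair of symmetric involutions. Your treatment of irreducibility preservation (via the equality of the unital algebras generated by $X$ and $\cP_W(X)$) is more explicit than the paper's one-line appeal to Lemma~\ref{lem:HomMinDefiningTransform}, and your handling of the passage from level~$1$ to level~$2$ in the final paragraph fills in a detail the paper leaves implicit.

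The genuine difference is in the size bound. The paper argues elementarily: for symmetric involutions $X_1,X_2\in SM_n(\R)$, either some eigenspace has dimension $>n/2$, forcing a common eigenvector by a dimension count, or all four eigenspaces have dimension $n/2$, in which case one computes that the symmetric commutant of each $X_i$ has dimension $n(n+2)/4$ and hence the joint symmetric commutant has dimension at least $n/2$, which exceeds~$1$ once $n>2$. Your route is more structural: you observe that $X_1X_2+X_2X_1$ lies in the commutant, invoke Frobenius to force it scalar, deduce $\dim_\R\cA\le 4$ for the generated algebra, and finish with the double centralizer theorem to rule out $n>2$ case by case over $D\in\{\R,\C,\HH\}$. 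The paper's argument is self-contained linear algebra and avoids any appeal to the classification of real division algebras; yours is shorter and makes transparent the underlying reason (the group $\mathbb{Z}_2*\mathbb{Z}_2$ is virtually abelian, so its real irreducible representations are at most two-dimensional), at the cost of importing Frobenius and double-centralizer machinery.
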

\begin{proof} A consequence of Lemma \ref{lem:HomMinDefiningTransform} is that the image of a reducible tuple under an invertible projective map is again reducible. By combining this fact with Theorem \ref{thm:ArvMappings} and Lemma \ref{lem:CubeProjMaps}, it is sufficient to restrict to the case where $\cD_A=\cC$ is the free matrix square. Using \cite[Theorem 1.1]{EHKM18}, a tuple $X \in \cC$ is a free extreme point of $\cC$ if and only if it is Arveson extreme and irreducible. Therefore it is sufficient to show that if $X$ is an Arveson extreme point of $\cC$ and $n>2$, then $X$ is is reducible.

As previously discussed, the Arveson boundary of the free matrix square is given by 
\[
\arv \cC=\{X \in SM(\R^2): I - X_1^2 = 0  \mathrm{\ and \ } I- X_2^2 = 0\}.
\]
Suppose $X=(X_1,X_2) \in SM_n (\R)^2$ is an Arveson extreme point of $\cC$ where $n \geq 2$. Then all the eigenvalues of each $X_i$ are either $1$ or $-1$. For $i=1,2$ let $\cE_i (\lambda)$ be the eigenspace of $X_i$ corresponding to the eigenvalue $\lambda$. Then we must have
\[
\dim \cE_i (1) +\dim \cE_i (-1)=n \qquad \mathrm{for \ } i =1,2. 
\]
Suppose there is some eigenspace, say $\cE_1 (1)$ with $\dim \cE_1 (1) > n/2$. Then a dimension count shows that either
\[
 \cE_1 (1) \cap \cE_2 (1) \neq \emptyset \qquad   \mathrm{ or } \qquad \cE_1 (1) \cap \cE_2 (-1) \neq \emptyset.
\]
In either case, we find that $X_1$ and $X_2$ have a common eigenvector, hence the tuple $(X_1,X_2)$ is reducible. 

Now suppose that neither $X_1$ nor $X_2$ has an eigenspace with dimension greater than $n/2$. Then $n$ must be even and we have
\[
\dim E_i (1)=\dim E_i (-1) = n/2 \qquad \mathrm{for \ } i=1,2.
\]
In this case there must exist unitaries $U_1,U_2 \in M_n (\R)$ such that $U_i^T (I_{n/2} \oplus -I_{n/2}) U_i$ for each $i=1,2$. Using this fact it is straightforward to show that for each $i=1,2$ the set of symmetric matrices which commute with $X_i$ is a $\frac{n(n+2)}{4}$ dimensional subspace of $SM_n (\R)$. Using $\dim SM_n (\R)=\frac{n(n+1)}{2}$ we conclude that the set of symmetric matrices commuting with both $X_1$ and $X_2$ is a subspace of $SM_n (\R)$ with dimension at least
\[
2 \frac{n(n+2)}{4} -\frac{n(n+1)}{2} = \frac{n}{2}.
\]
It follows that if $n>2$, then there is a symmetric matrix which is not a constant multiple of the identity that commutes with both $X_1$ and $X_2$, hence the tuple $X$ is reducible. We conclude that if $X \in SM_n (\R)^2$ is a free extreme point of $\cC$, then $n \leq 2$. 

The claim that a free quadrilateral is the matrix convex hull of its Arveson extreme points at level $2$ is a consequence of the first part of the proposition together with \cite[Theorem 1.1]{EH19} which shows that any free spectrahedron is the matrix convex hull of its free extreme points. 
\end{proof}

\begin{remark}
Proposition \ref{prop:BoundedFreeExtDim} does not generalize to free spectrahedra in more than two variables. For example one may consider the $g$ variable free cube $\cC^g$ defined by 
\[
\cC^g = \{X \in SM (\R)^g | \ X_i^2 \preceq I \mathrm{\ for \ } i=1,\dots,g\}.
\]
Then  \cite[Proposition 7.1]{EHKM18} shows that $\cC^g$ has Arveson boundary 
\[
\arv \cC^g = \{X \in SM (\R)^g | \ I - X_i^2 = 0 \mathrm{\ for \ } i=1,\dots,g\}.
\]
It is not difficult to show that there exist irreducible tuples in $X \in \arv \cC^g (n)$ for all $n$, hence $\cC^g$ has free extreme points at all levels $n$.
\end{remark}

\section{Appendix}

\subsection{Proof of Lemma \ref{lem:CubeProjMaps}} 
\label{sec:AppProjLemProof}

We now prove Lemma \ref{lem:CubeProjMaps}.
\begin{proof}
Given free quadrilaterals $\cD_A$ and $\cD_B$ we will show that there is an invertible matrix $V \in M_3 (\R)$ such that 
\[
\cT_V (I,A) = (\tB_0,B) \qquad \mathrm{where \ } \tB_0 \succ 0 \mathrm{ \ and \ } \tB_0^{-1/2} \tB \tB_0^{-1/2} = B. 
\]
In this argument we assume that the $A$ and $B$ are minimal defining tuples for the free quadrilaterals $\cD_A$ and $\cD_B$, respectively.

Since the composition of projective maps is again a projective map, it is sufficient to treat the case where $B$ is fixed and $A$ is arbitrary. We set
\[
B_1 =\begin{pmatrix} 1 & 0 & 0 & 0 \\
0 & -1 & 0 & 0 \\
0 & 0 & -1 & 0\\ 
0 & 0 & 0 & 1
\end{pmatrix} \qquad 
B_2 =\begin{pmatrix} 1 & 0 & 0 & 0 \\
0 & 1 & 0 & 0 \\
0 & 0 & -1 & 0\\ 
0 & 0 & 0 & -1
\end{pmatrix}
\]
and for $i=1,2$ we let $A_i$ be a $4 \times 4$ diagonal matrix with diagonal elements $a_{ij}$ for $j=1,2,3,4$. We may rearrange the diagonal elements of the tuple $A$ without changing the free spectrahedron $\cD_A$, so we WLOG assume that the diagonal elements are arranged such that 
\[
\theta (a_{11},a_{21}) < \theta (a_{12},a_{22})  < \theta (a_{13},a_{23})  < \theta (a_{14},a_{24})
\]
where $\theta (a_{1j},a_{2j})$ denotes the angle the tuple $(a_{1j},a_{2j})$ forms with the positive $x$-axis when proceeding counter clockwise. 

It straight forward to check that there is an invertible matrix $U \in M_3 (\R)$ of the form
\[
U =\begin{pmatrix}
1 & 0 & 0 \\
0 & u_{22} & u_{23} \\
0 & u_{32} & u_{33} \\
\end{pmatrix}
\]
such that $\cT_{U} (I,A_1,A_2) = (I,\hB_1, \hB_2)$ where the matrices $\hB_1$ and $\hB_2$ have the form
\[
\hB_1 =\begin{pmatrix} \hb_{11} & 0 & 0 & 0 \\
0 & -\hb_{12} & 0 & 0 \\
0 & 0 & -\hb_{12} & 0\\ 
0 & 0 & 0 & \hb_{14}
\end{pmatrix} \qquad
\hB_1 =\begin{pmatrix} \hb_{21} & 0 & 0 & 0 \\
0 & \hb_{22} & 0 & 0 \\
0 & 0 & -\hb_{23} & 0\\ 
0 & 0 & 0 & -\hb_{23}
\end{pmatrix}
\]
where $\hb_{ij} \geq 0 $ if either $i>1$ or $j>1$. An application of Lemma \ref{lem:BoundedNecessaryConditions} \eqref{it:BoundedSufficient} shows that the homogeneous free spectrahedron $\cH_{(I,\hB_1,\hB_2)} = \cT_{U^{-1}} \cH_{(I,B)}$ is bounded from which it follows $\hb_{12}+\hb_{14}>0$ and $\hb_{22}+\hb_{23} >0$. Intuitively, $\cT_U$ is a linear map which sends the quadrilateral with corners $(a_{1j},a_{2j})$ to a quadrilateral with one side parallel to the $x$-axis and an adjacent side parallel to the $y$-axis.

Now let $W \in M_3 (\R)$ be matrix
\[
W = \begin{pmatrix} 
1 & 0 & 0 \\
\frac{\hb_{12}-\hb_{14}}{\hb_{12}+\hb_{14}} & \frac{2}{\hb_{12}+\hb_{14}} & 0 \\
\frac{\hb_{23}-\hb_{22}}{\hb_{23}+\hb_{22}} & 0 &  \frac{2}{\hb_{23}+\hb_{22}}
\end{pmatrix}
\]
Then $W$ is invertible and we have 
\[
\cT_W (I,\hB_1,\hB_2) = 
\left( \begin{pmatrix} 1 & 0 & 0 & 0 \\
0 & 1 & 0 & 0 \\
0 & 0 & 1 & 0\\ 
0 & 0 & 0 & 1
\end{pmatrix}, 
\begin{pmatrix} \beta_1 & 0 & 0 & 0 \\
0 & -1 & 0 & 0 \\
0 & 0 & -1 & 0\\ 
0 & 0 & 0 & 1
\end{pmatrix},
\begin{pmatrix} \beta_2 & 0 & 0 & 0 \\
0 & 1 & 0 & 0 \\
0 & 0 & -1 & 0\\ 
0 & 0 & 0 & -1
\end{pmatrix}.
\right)
\]

Write $\cT_W(I,\hB_1,\hB_2)= (I,\check{B}_1,\check{B}_2).$ Once again using Lemma \ref{lem:BoundedNecessaryConditions} \eqref{it:BoundedSufficient} shows that the free spectrahedron $\cD_{(\check{B}_1,\check{B}_2)}$ is bounded. Using \cite[Proposition 4.3]{HKMjems} shows that $0$ must be in interior the convex hull of $(\beta_1,\beta_2),(-1,1),(-1,-1),$ and $(1,-1)$ which allows us to conclude that $\beta_1+\beta_2 > 0$ and $\beta_1 >-1$ and $\beta_2>-1$. 

Finally let $Z \in M_3 (\R)$ be the matrix 
\[
Z = \frac{1}{2+\beta_1+\beta_2} \begin{pmatrix} 2+\beta_1 +\beta_2 & 1- \beta_2 & 1-\beta_1 \\
\beta_1-\beta_2 & 1+ 2 \beta_1 +\beta_2 & -1 +\beta_1 \\
\beta_2 - \beta_1 & -1 +\beta_2 & 1+ \beta_1 + 2 \beta_2 
\end{pmatrix}
\]
Then $Z$ has determinant
\[
\det (Z) = \frac{8 (1+\beta_1) (1+\beta_2) (\beta_1+\beta_2)}{(2+\beta_1+\beta_2)^3}
\]
hence the constraints $\beta_1+\beta_2 > 0$ and $\beta_1 >-1$ and $\beta_2>-1$ guarantee that $Z$ is invertible. Setting $\cT_{Z^{-1}} (I,\check{B}_1,\check{B}_2) = (\tB_0,\tB_1,\tB_2)$ we obtain
\[
\tB_0 = 
\begin{pmatrix}
\frac{2+\beta_1+\beta_2}{4} & 0 & 0 & 0 \\
0 & \frac{2+\beta_1+\beta_2}{2+2\beta_2} & 0 & 0 \\
0 & 0 & \frac{2+\beta_1+\beta_2}{2(\beta_1+\beta_2)} & 0 \\
0 & 0 & 0 & \frac{2+\beta_1+\beta_2}{2 + 2\beta_1} 
\end{pmatrix}, \quad 
\tB_1 \begin{pmatrix}
\frac{2+\beta_1+\beta_2}{4} & 0 & 0 & 0 \\
0 & -\frac{2+\beta_1+\beta_2}{2+2\beta_2} & 0 & 0 \\
0 & 0 & -\frac{2+\beta_1+\beta_2}{2(\beta_1+\beta_2)} & 0 \\
0 & 0 & 0 & \frac{2+\beta_1+\beta_2}{2 + 2\beta_1} 
\end{pmatrix}
\]
and
\[
\tB_2= \begin{pmatrix}
\frac{2+\beta_1+\beta_2}{4} & 0 & 0 & 0 \\
0 & \frac{2+\beta_1+\beta_2}{2+2\beta_2} & 0 & 0 \\
0 & 0 & -\frac{2+\beta_1+\beta_2}{2(\beta_1+\beta_2)} & 0 \\
0 & 0 & 0 & -\frac{2+\beta_1+\beta_2}{2 + 2\beta_1} 
\end{pmatrix}
\]
The constraints on $\beta_1$ and $\beta_2$ guarantee that $\tB_0$ is positive definite and by construction we have $B_i = \tB_0^{-1/2} \tB_i \tB_0^{-1/2}$ for $i=1,2$.   Setting $V = Z^{-1} W U$ gives an invertible matrix such that $\cP_V (A)=B$. Using lemma \ref{lem:HomSpecTransformProps} we conclude that $\cP_{V^{-T}} (\cD_A) =\cD_B $.
\end{proof}

\newpage
\centerline{NOT FOR PUBLICATION}

\tableofcontents

\end{document}